\documentclass[11pt]{article}
\usepackage{amsmath,amsthm,amsfonts,amsthm,amssymb, mathtools,longtable,multirow,subfig}
\usepackage{mathrsfs,array,graphicx,color,latexsym,tikz,calc,pifont,natbib}
\usetikzlibrary{shadows,patterns,arrows,decorations.pathreplacing}
\usetikzlibrary{calc,arrows.meta,arrows,decorations.markings,chains,positioning}

\setcitestyle{numbers,square}

\DeclareMathOperator{\dv}{div}
\DeclareMathOperator{\diag}{diag}
\DeclareMathOperator{\curl}{curl}
\DeclareMathOperator{\grad}{grad}
\DeclareMathOperator{\Sp}{Sp}

\theoremstyle{plain}
\newtheorem{thm}{\bf Theorem}[section]
\newtheorem{cor}[thm]{\bf Corollary}
\newtheorem{df}[thm]{\bf Definition}
\newtheorem{re}[thm]{\bf Remark}
\newtheorem{lem}[thm]{\bf Lemma}
\newtheorem{prop}[thm]{\bf Proposition}
\newtheorem{problem}[thm]{\bf Problem}
\newtheorem{examplex}[thm]{\bf Example}

\renewenvironment{abstract}{\par\noindent\textbf{\abstractname.}\ \ignorespaces}{\par\medskip}
\providecommand{\keywords}[1]{\small \textbf{\textit{Keywords.}} #1}
\providecommand{\ams}[1]{\small \textbf{\textit{AMS subject classifications.}} #1}

\title{\bf \Large  Helmholzian spectra of graphs: basic properties\vspace{-0.5em}}

\author{
{\small   Lu Lu$^a$, \ \ Yongtang Shi$^b$, \ \ Zoran Stani\'c$^{c}$, \ \ Jianfeng Wang$^{d,}$\footnote{Corresponding author.\newline{Email addresses:} lulugdmath@163.com (L. Lu), shi@nankai.edu.cn (Y.T. Shi), zstanic@matf.bg.ac.rs (Z. Stani\' c), jfwang@aliyun.com (J.F. Wang), wangy@ahu.edu.cn (Y. Wang).}\;, \ \ Yi Wang$^e$ }\\[2mm]
\footnotesize $^a$School of Mathematics and Statistics, HNP-LAMA, Central South University,
Changsha 410083, China\\
\footnotesize $^b$Center for Combinatorics and LPMC, Nankai University, Tianjin 300071, China\\
\footnotesize $^c$Faculty of Mathematics, University of Belgrade, Studentski trg 16, Belgrade, Serbia\\
\footnotesize $^d$School of Mathematics and Statistics, Shandong University of Technology, Zibo 255049, China\\
\footnotesize $^e$School of Mathematics, Anhui University, Hefei 230039, China\\}

\date{}

\begin{document}
\maketitle

\begin{abstract}
The Helmholtzian matrix of a graph $G=(V(G),E(G))$ is a graph-theoretic analogue of the vector Laplacian (or Helmholtz operator) [S. Li, L. Lu, J.F. Wang, A graph discretization of vector Laplacian,  379 (2026) 446--460]. Motivated by the applications of graph Helmholtzian in simplicial networks, we will investiagte its basic spectral properties. As the first graph matrix indexed by edge set, we find that Helmholtzian matrix is positive semi-definite and its non-negativity correlates with the odd cycles in $G$ and the orientation on $E(G)$, while its irreducibility relates to the signed graphs with loops. We show that the eigenvalues of Helmholtzian matrix are independent of the orientation and further investigate the eigenvalue interlacing under edge additions. One of striking findings is that the non-zero  eigenvalues of the Laplacian matrix are those of Helmholtzian matrix of every graph.  All these discoveries reveal that the Helmholtzian spectrum of $G$ balances and bridges the oriented graphs, weighted graphs and signed graphs as well as their adjacency or Laplacian spectra.
\end{abstract}

{\noindent}\keywords Graph spectrum; Helmholtzian matrix; Laplacian matrix; Signed graph; Oriented graph. \\[-4mm]

{\noindent}\ams 05C50, 05C22.


\section{Introduction}

Let $G = (V(G),E(G))$ be a simple and undirected graph.  The \textit{adjacency matrix} of $G$, denoted by $A(G) = (a_{ij})$, has rows and columns indexed by the vertices, along with $a_{ij} =1$ if $v_iv_j \in E(G)$ and $a_{ij} =0$ otherwise. Then the \textit{Laplacian matrix} of $G$ is defined as $L(G) = D(G)-A(G)$ where $D(G) = \diag(d(v_1),d(v_2),\ldots,d(v_n))$ is the diagonal matrix of vertex degrees in $G$. As known, the Laplacian matrix  $L(G)$ is a graph discretization of scalar Laplacian $\nabla^2 = -\dv\grad$, where $\dv$ and $\grad$ are respectively the divergence and gradient operators in the real space $\mathbb{R}^3$ \cite[eg.]{Hodge-lim}. Recently, the authors \cite{li-lu-wang} gave the graph matrix presentation of vector Laplacian
\begin{equation}\label{vector-F1}
-\nabla^2 \mathbf{F}= \grad\grad^*\mathbf{F}+\curl^*\curl \mathbf{F},
\end{equation}
where the operator $\curl$ is over a vector field  $\mathbf{F}$ in $\mathbb{R}^3$ as well as $\grad^*$ and $\curl^*$ are respectively the adjoint operators. In order to describe this matrix, we introduce some necessary terminologies and notations from \cite{li-lu-wang}.

For an ordinary graph $G$,  we will simultaneously deal with edge orientations and associated graphs having loops. In further considerations, we will also associate graphs to particular signed graphs. In short, the main subject are ordinary graphs, and their generalizations are considered as tools. For the any orientation on the edges and triangles of $G$, let $e^+$ and $e^-$ be the {\it head} and the {\it tail} of an oriented edge $e$ respectively. Set $\mathcal{V}(e)=\{e^+, e^-\}$. Write $u\rightarrow v$ if there is an oriented edge from $u$ to $v$. Set $u\sim v$ if $u$ and $v$ are adjacent, and $u \nsim v$ otherwise. Thus, $u\sim v$ indicates either $u\rightarrow v$ or $v\rightarrow u$. Denote by $v \in e$ if a vertex $v\in \mathcal{V}(e)$. Moreover, let $e\rightarrow v$ if $v=e^+$ and $v\rightarrow e$ if $v=e^-$. For two edges $e_1$ and $e_2$, put $e_1\sim e_2$ if
$\mathcal{V}(e_1)\cap \mathcal{V}(e_2)\ne\emptyset$. Denote by $e_1\rightarrow e_2$ if $e_1^+=e_2^-$, $e_1\overset{+}{\sim}e_2$ if $e_1^+=e_2^+$, and
$e_1\overset{-}{\sim}e_2$   if $e_1^-=e_2^-$. Let $e_1\leftrightarrow e_2$ mean that
either $e_1\rightarrow e_2$ or $e_2 \rightarrow e_1$,   $e_1\overset{\pm}{\sim}e_2$ mean that
either $e_1\overset{+}{\sim}e_2$ or $e_1\overset{-}{\sim}e_2$, and  $e_1\vartriangle e_2$ mean that $e_1$
and $e_2$ are in the same triangle. For an edge $e$ and a triangle
$\vartriangle$, set $e\in\vartriangle$ if $e$ is an edge of $\vartriangle$. Moreover, if
the orientation of $e$ agrees with that of $\vartriangle$, then set
$e\in\vartriangle^+$, and $e\in\vartriangle^-$ otherwise. For clarity, the symbols are summarized in Table 1, as they will be used frequently throughout this paper.

\begin{table}[h]
		\centering
		\begin{minipage}{0.48\textwidth}
			\centering
			\begin{tabular}{|c|c|c|c|}
				\hline
				\multicolumn{3}{|c|}{Symbol} & Diagram \\
				\hline
				\multirow{2}*{$u\sim v$} & \multicolumn{2}{c|}{$u\rightarrow v$} &
				\includegraphics[scale=0.6]{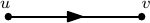}  \\[1.6mm]
				\cline{2-4}
				~ & \multicolumn{2}{c|}{$v\rightarrow u$} & \includegraphics[scale=0.6]{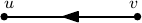}  \\[1.6mm]
				\hline
				\multirow{2}*{$u\in e$} & \multicolumn{2}{c|}{$u\rightarrow e$~($u=e^-$)} &
				\includegraphics[scale=0.6]{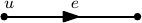}  \\[1.6mm]
				\cline{2-4}
				~ & \multicolumn{2}{c|}{$e\rightarrow u$~($u=e^+$)} & \includegraphics[scale=0.6]{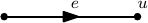}  \\[1.6mm]
				\hline
				\multirow{4}*{$e_1\sim e_2$} & \multirow{2}*{$e_1\overset{\pm}{\sim}e_2$} &
				$e_1\overset{+}{\sim}e_2$ & \includegraphics[scale=0.6]{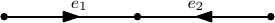}  \\[1.6mm]
				\cline{3-4}
				~ & ~ & $e_1\overset{-}{\sim}e_2$ & \includegraphics[scale=0.6]{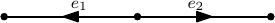} \\[1.6mm]
				\cline{2-4}
				~& \multirow{2}*{$e_1\leftrightarrow e_2$} & $e_1\rightarrow e_2$ &
				\includegraphics[scale=0.6]{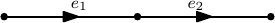}\\[1.6mm]
				\cline{3-4}
				~ & ~ & $e_2\rightarrow e_1$ &  \includegraphics[scale=0.6]{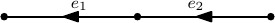}\\[1.6mm]
				\hline
			\end{tabular}
		\end{minipage}
		\hfill
		\begin{minipage}{0.48\textwidth}
			\centering
			\begin{tabular}{|c|c|c|c|}
				\hline
				\multicolumn{3}{|c|}{Symbol} & Diagram \\
				\hline
				\multicolumn{3}{|c|}{$e_1\vartriangle e_2$} &
				\raisebox{-.5\height}{\includegraphics[scale=0.65]{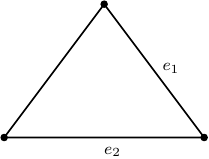}}\\[1.6mm]
				\hline
				\multirow{2}*{$e\in\vartriangle$} & \multicolumn{2}{c|}{$e\in\vartriangle^+$} &
				\raisebox{-.5\height}{\includegraphics[scale=0.75]{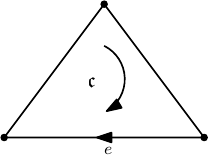}}\\[1.6mm]
				\cline{2-4}
				~ & \multicolumn{2}{c|}{$e\in\vartriangle^-$} &
				\raisebox{-.5\height}{\includegraphics[scale=0.75]{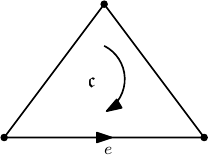}} \\[1.6mm]
				\hline
			\end{tabular}
		\end{minipage}
\caption{Relations between vertices, edges and triangles.}
	\end{table}

For each vertex $u \in V(G)$, the {\it in-neighbourhood}, the {\it out-neighbourhood} and
the {\it neighbourhood}  are  defined as
$$N_G^+(u)=\{v\in V\mid v\rightarrow u\},\, N_G^-(u)=\{v\in V\mid u\rightarrow v\}~
\text{and}~  N_G(u)=N_G^+(u)\cup N_G^-(u),$$ respectively. The cardinalities $d_G^+(u)=|N_G^+(u)|$,
$d_G^-(u)=|N_G^-(u)|$ and $d_G(u)=|N_G(u)|$ are called the {\it in-degree}, the {\it
out-degree} and the {\it degree} of $u$. For an edge $e \in E(G)$, let
$\triangle_G(e)=|\{\vartriangle\in T(G)\mid e\in \vartriangle\}|$ be the {\it triangle degree of $e$}, which is also denoted by $\triangle(e)$ when $G$ is unambiguous. As usual, we
write $O_{x\times y}$, $J_{x\times y}$, $I_{x}$, $\mathbf{0}_x$ $\mathbf{1}_x$ and  for the all-zero
matrix, the all-one matrix, the identity matrix, the all-0 column vector and the all-1 column vector of the corresponding size, respectively.
The subscript may be omitted. The {\it edge-vertex incidence matrix}
$\mathcal{B}=\mathcal{B}(G)=(b_{ev})_{m\times n}$ and the {\it triangle-edge incidence matrix}
$\mathcal{C}=\mathcal{C}(G)=(c_{\vartriangle e})_{t\times m}$ are defined as
\begin{equation*}\label{B-C}
b_{ev}=
\begin{cases}
-1,& v\rightarrow e\\
1,&e\rightarrow v\\
0,&\text{otherwise}
\end{cases} \;\;\text{and}\;\; c_{\vartriangle e}=
\begin{cases}-1,&
e\in\vartriangle^-\\
1,&e\in\vartriangle^+\\
0,&\text{otherwise}.
\end{cases}
\end{equation*}
For example, considering the graph $G$ of Fig.~\ref{fig-1}  we obtain

\begin{figure}[htbp]
\begin{minipage}{0.4\linewidth}
\includegraphics[scale=0.7]{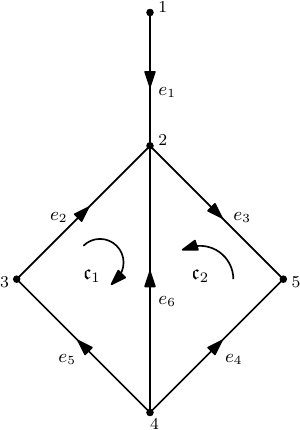}
\end{minipage}
\hspace{-2.5cm}
\begin{minipage}{0.6\linewidth}
{\footnotesize\[\mathcal{B}(G)=\left(\begin{array}{ccccc}
-1&1&0&0&0\\
0&1&-1&0&0\\
0&-1&0&0&1\\
0&0&0&-1&1\\
0&0&1&-1&0\\
0&1&0&-1&0
\end{array}\right)~~ \text{and}~~
\mathcal{C}(G)=\left(\begin{array}{cccccccc}
0&1&0&0&1&-1\\
0&0&-1&1&0&-1
\end{array}\right).\]}
\end{minipage}
\caption{The matrices $\mathcal{B}(G)$ and $\mathcal{C}(G)$ of the presented graph $G$.}
\label{fig-1}
\end{figure}

We are now in the position to give the graph matrix representation of vector Laplacian, named by the {\it Helmholtzian matrix} $\mathcal{H}(G)$ of a graph $G$ \cite{li-lu-wang}.

\begin{df}{\rm \cite{li-lu-wang}}\label{H-def}
Let $G$ be a graph  equipped with orientations on its edge set $E$ and triangle set $T$. Then the Helmholtzian matrix $\mathcal{H}(G)=(h_{ee'})$  of $G$ is defined as follows:
$$
h_{ee'}=
\begin{cases}
\triangle(e)+2, & \mbox{if $e'=e$}\\
-1, & \mbox{if $e\leftrightarrow e'$ and $e\not\vartriangle e'$}\\
1, & \mbox{if $e'\overset{\pm}{\sim} e$ and $e\not\vartriangle e'$} \\
0, & \mbox{otherwise}.
\end{cases}
$$
\end{df}

Previously, the Helmholtzian matrix is denoted as the sum ${\mathcal B}{\mathcal B}^{\intercal}+{\mathcal C}^{\intercal}{\mathcal C}$ \cite{Hodge-lim}, which has been applied in statistical ranking \cite{jiang-lim} and simplicial networks \cite{sch-Siam}. However, it has attracted a very sporadic attention on its basic spectral properties, which will be investigated in this paper.

In contrast to many other matrices associated with graphs, it is indexed by the edge set. By a convention, an edgeless graph is represented by  the $0\times 0$ matrix. Throughout the entire paper, unless told otherwise, we assume that a graph under consideration has at least one edge!

\begin{examplex}\label{ex-0} We return, once again, to the graph $G$ illustrated in Fig.~\ref{fig-1}.
From Theorem \ref{H-def}, one can easily deduce  that
\[\mathcal{H}(G)=\begin{pmatrix}
2&1&-1&0&0&1\\
1&3&-1&0&0&0\\
-1&-1&3&0&0&0\\
0&0&0&3&1&0\\
0&0&0&1&3&0\\
1&0&0&0&0&4
\end{pmatrix},\]
which is just $\mathcal{B}\mathcal{B}^{\intercal} +\mathcal{C}^{\intercal}\mathcal{C}$.
\end{examplex}

In the remainder of this paper, we will investigate the basic spectral properties of the graph Helmholtzian, along with some additional topics. Precisely, in Section \ref{H40} we show that the eigenvalues of Helmholtzian matrices (or $\mathcal{H}$-eigenvalues) are independent of the orientations of graphs.  In Section~\ref{H43} we consider the irreducibility of this matrix and relate the corresponding graph to a particular signed graph. In Section~\ref{H42} we deal with the interlacing between the $\mathcal{H}$-eigenvalues of a graph.  Additional relations between $\mathcal{H}$-eigenvalues and Laplacian eigenvalues are established in Subsection~\ref{H41}. In particular, we prove that the $\mathcal{H}$-spectrum of a graph is closely related to the Laplacian spectrum of the same graph and the spectrum of an associated graph.

\section{Orientation invariance of Helmholtzian eigenvalues}\label{H40}

It is worth mentioning that  the Helmholtzian matrix
$\mathcal{H}(G)$ does not depend on the orientations of  triangles, even
though the matrix $\mathcal{C}$ does depend on these orientations. Thus, it suffices to
consider graphs with  orientation assigned to their edge set. Although the matrix $\mathcal{H}(G)$
leans on the orientation of edges, the next result establishes that the eigenvalues of
$\mathcal{H}(G)$ do not depend on the choice of orientation.

A \textit{switching matrix} is the diagonal matrix of $\pm1$s. Two matrices $X$ and $Y$ are {\it switching similar} if there is a switching
matrix $S$ such that $Y=S^{-1}XS = SXS$.

\begin{thm}\label{lem-2}
Let $G$ be an oriented graph and $G'$ the oriented graph obtained from $G$ by switching the direction of an
edge $e$. Then $\mathcal{H}(G)$ and $\mathcal{H}(G')$ are switching similar, and thus they share the same eigenvalues.
\end{thm}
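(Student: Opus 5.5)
The natural route is through the factorization $\mathcal{H}(G)=\mathcal{B}\mathcal{B}^{\intercal}+\mathcal{C}^{\intercal}\mathcal{C}$ recorded after Definition~\ref{H-def}, rather than through the entrywise case analysis. Let $S$ be the switching matrix indexed by $E(G)$ with $S_{ee}=-1$ and $S_{ff}=1$ for every $f\neq e$. The goal is to show that reversing $e$ changes both incidence matrices only by the sign of the entries attached to $e$:
\[
\mathcal{B}(G')=S\mathcal{B}(G), \qquad \mathcal{C}(G')=\mathcal{C}(G)S .
\]
Here the triangle orientations are held fixed. This costs nothing, since the paper has already noted that $\mathcal{H}$ does not depend on the triangle orientations.

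For $\mathcal{B}$, reversing $e$ swaps $e^+$ and $e^-$. Hence the row of $e$ in $\mathcal{B}$, which holds $1$ at the head and $-1$ at the tail, is multiplied by $-1$, and all other rows are unchanged. That gives $\mathcal{B}(G')=S\mathcal{B}(G)$. For $\mathcal{C}$, consider a triangle $\vartriangle$ containing $e$. Before the reversal exactly one of $e\in\vartriangle^+$ or $e\in\vartriangle^-$ holds, and after the reversal the other one holds. So the entry $c_{\vartriangle e}$ changes sign, while the entries $c_{\vartriangle f}$ with $f\neq e$ stay the same. This is multiplication of column $e$ by $-1$, that is, $\mathcal{C}(G')=\mathcal{C}(G)S$.

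From these two identities,
\[
\mathcal{H}(G')=S\mathcal{B}\mathcal{B}^{\intercal}S+S\mathcal{C}^{\intercal}\mathcal{C}S=S\mathcal{H}(G)S ,
\]
and $S=S^{-1}$, so the two matrices are switching similar and therefore share their eigenvalues.

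The one step that needs care is the identity $\mathcal{H}=\mathcal{B}\mathcal{B}^{\intercal}+\mathcal{C}^{\intercal}\mathcal{C}$ itself, because the paper states it only as prior knowledge, with a citation and an example. If it cannot be taken as given, I would instead check $S\mathcal{H}(G)S=\mathcal{H}(G')$ directly from Definition~\ref{H-def}.

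Conjugating by $S$ negates the off-diagonal entries in row $e$ and column $e$ and leaves everything else alone. So it suffices to check the following for every edge $f\neq e$ sharing a vertex with $e$:

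\begin{itemize}
\item The relation between $e$ and $f$ flips between $e\leftrightarrow f$ and $e\overset{\pm}{\sim}f$ when $e$ is reversed. This holds because the shared vertex changes from being the head of $e$ to its tail, or the reverse, while its role in $f$ is unchanged.
\item The triangle condition $e\vartriangle f$ is unaffected by the reversal.
\item The diagonal entry $\triangle(e)+2$ is unaffected by the reversal.
\end{itemize}
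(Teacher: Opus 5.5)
Your proof is correct, but your main argument takes a different route from the paper's. The paper argues exactly as in your fallback. Entries between two edges other than $e$ are untouched, and zero entries in row $e$ stay zero. For $f$ meeting $e$ with $e\not\vartriangle f$, the reversal exchanges $e\overset{\pm}{\sim}f$ and $e\leftrightarrow f$, so $h_{ef}$ changes sign and $\mathcal{H}(G')=S\mathcal{H}(G)S$. Your fallback bullets only treat row $e$; you should also record, as the paper does, that the entries $h_{fg}$ with $f,g\neq e$ are unchanged.

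Your primary argument works one level down, through $\mathcal{B}(G')=S\mathcal{B}(G)$ and $\mathcal{C}(G')=\mathcal{C}(G)S$. What this buys is conceptual. Reversing an edge is a sign change of one coordinate of the edge space, so the statement extends at once to reversing an arbitrary set $F$ of edges by conjugating with the switching matrix of $F$. It also makes transparent why triangle orientations play no role: reorienting triangles replaces $\mathcal{C}$ by $D\mathcal{C}$ with $D$ a switching matrix, which leaves $\mathcal{C}^{\intercal}\mathcal{C}$ fixed.

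The price is the one you flagged. The paper cites the identity $\mathcal{H}=\mathcal{B}\mathcal{B}^{\intercal}+\mathcal{C}^{\intercal}\mathcal{C}$ rather than deriving it from the definition, although it relies on it itself right after this theorem for positive semi-definiteness and later in the comparison with the Laplacian. Verifying that identity needs the same local case analysis. For distinct $e,f$ meeting at $v$ one has $(\mathcal{B}\mathcal{B}^{\intercal})_{ef}=b_{ev}b_{fv}$, and if $e\vartriangle f$ the unique common triangle contributes $c_{\vartriangle e}c_{\vartriangle f}=-b_{ev}b_{fv}$. So your route relocates the casework rather than avoiding it, whereas the paper's proof is self-contained from the definition.
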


\begin{proof}
Note that the relations between any pair of edges in $E(G)\setminus \{e\}$ are retained in $G'$. This means that $$\mathcal{H}(G)_{e_x,e_y}=\mathcal{H}(G')_{e_x,e_y}$$ holds for every pair $e_x,e_y\in E(G)\setminus\{e\}$. Also, one can easily verify that $\mathcal{H}(G')_{e,e'}=0$ holds whenever $\mathcal{H}(G)_{e,e'}=0$. If $\mathcal{H}(G)_{e,e'}=1$, then $e\overset{\pm}{\sim} e'$ and $e\not\vartriangle e'$ in $G$. Hence, $e\leftrightarrow e'$ and $e\not\vartriangle e'$ in $G'$ which implies $\mathcal{H}(G')_{e,e'}=-1$. Similarly, we have $\mathcal{H}(G')_{e,e'}=1$ whenever $\mathcal{H}(G)_{e,e'}=-1$. Therefore, $$\mathcal{H}(G')=S\mathcal{H}(G)S,$$ where $S$ is the diagonal matrix in which the $e$th diagonal entry is $-1$, whereas the remaining diagonal entries are~$1$. The desired result follows since similar matrices share the same spectrum.
\end{proof}

The {\it Helmholtzian spectrum} (or the \textit{$\mathcal{H}$-spectrum})  $\Sp_{\mathcal{H}}(G)$ of an undirected graph $G$ is the spectrum of the Helmholtzian matrix of an oriented graph obtained by assigning an edge orientation to $G$. Since both  $\mathcal{B}\mathcal{B}^\intercal$ and $\mathcal{C}^\intercal\mathcal{C}$ are positive semi-definite, the same holds for their sum $\mathcal{H}(G)$. Observing  $\mathcal{H}(G)$ is real and symmetric, we deduce that its eigenvalues, called {\it Helmholtzian eigenvalues} (or {\it $\mathcal{H}$-eigenvalues}), are non-negative real numbers. Therefore, they can be listed as
\begin{equation*}\label{H-eigen1}
\lambda_1(G)\ge\lambda_2(G)\ge\cdots\ge\lambda_m(G) \geq 0,
\end{equation*}
where $m$ is the number of edges, of course.
In particular, if $\lambda_1 > \lambda_2 > \cdots > \lambda_k$ are distinct $\mathcal{H}$-eigenvalues of $G$, then the  {\it $\mathcal{H}$-spectrum} of $G$ is written as
$$
{\Sp}_{\mathcal{H}}(G) = \left\{\!\!\begin{array}{cccc}
                                 \lambda_1(G) & \lambda_2(G) & \cdots & \lambda_k(G) \\
                                  m_1  &  m_2     & \cdots &  m_k
                                \end{array}
                         \!\!\right\},
$$
where $m_i$ is multiplicity of $\lambda_i$ ($1 \leq i \leq k$).

To provide a better insight into the $\mathcal{H}$-spectrum, we compare it to the standard Laplacian spectrum and compute it for some graphs. The comparison is given in the following remark.

\begin{re}\label{relation-HL}
	 It follows from definition of the Laplacian matrix of a graph $G$ that $L(G) = \mathcal{B}^{\intercal}\mathcal{B}$, where $\mathcal{B}$ is the foregoing edge-vertex incidence matrix. Now, if
	$G$ is triangle-free, from Definition \ref{H-def} we obtain $\mathcal{H}(G) = \mathcal{B}\mathcal{B}^{\intercal}$. Consequently, the $\mathcal{H}$-spectrum of $G$ is comprised of its
	non-zero Laplacian eigenvalues along with $m-n+1$ zero eigenvalues. However, it is an established fact that the triangles are largely involved in small-world networks, revealing that the $\mathcal{H}$-spectrum would be more helpful in their study~\cite{shi-chen,wat-str}.
\end{re}

We proceed with examples.

\begin{examplex}\label{ex-1}
The $\mathcal{H}$-spectrum of the complete graph $K_n$ is
$$
{\Sp}_{\mathcal{H}}(K_n) = \left\{\!\!\begin{array}{c}
                                 n  \\
                                 {n \choose 2}\\
                                \end{array}
                         \!\!\right\},
$$
because $\mathcal{H}(K_n) = \diag(n,n,\ldots,n)$ for every orientation of $K_n$.
\end{examplex}

\begin{examplex}
The $\mathcal{H}$-spectrum of the path $P_n$ is
$$
{\Sp}_{\mathcal{H}}(P_n) =
\left\{\!\!\begin{array}{ccccc}
2\cos\frac{\pi}{n}+2 &  2\cos\frac{2\pi}{n}+2 & \cdots & 2\cos\frac{(n-1)\pi}{n}+2
\\
1  & 1 & \cdots & 1
\end{array}
\!\!\right\}.
$$
 Namely, if $P_n$ is assigned an orientation such that $e\overset{\pm}{\sim}
e'$ whenever $e\sim e'$, then $\mathcal{H}(P_n)=A(P_{n-1})+2I$. Since  the eigenvalues of $A(P_{n-1})$ are $2\cos\frac{j\pi}{n}$ ($1 \leq j \leq  n-1$) \cite[p.~73]{cve-doob-sachs-book}, we arrive at the desired $\mathcal{H}$-spectrum.
\end{examplex}

On the basis of Remark~\ref{relation-HL}, one may obtain the Laplacian eigenvalues of $P_n$. Indeed, since $P_n$ is triangle-free, we have $\mathcal{H}(P_n)=\mathcal{B}\mathcal{B}^{\intercal}$, which yields that the Laplacian
eigenvalues of $P_n$ are 0 and $2\cos\frac{\pi j}{n}+2$  ($1\le j\le n-1$). More examples computing the $\mathcal{H}$-spectrum are given in the Subsection~\ref{H43}.

Observing that adding isolated vertices does not affect the Helmholtzian matrix, we deduce that a graph is not reconstructible from this matrix. This brings us to the following problem to end this section.

\begin{problem}
	Decide under which assumptions a graph is uniquely reconstructed from its Helmholtzian matrix.
\end{problem}

\section{Irreducibility and a relationship with signed graphs}\label{H43}

A symmetric square matrix $M$ is {\it reducible} if there exists a permutation matrix $P$, such that
$$P^{-1}MP=\begin{pmatrix}
     E&O\\O&F
\end{pmatrix},$$
where $E$ and $F$ are square matrices. Otherwise, $M$ is called {\it irreducible}. Generally speaking, it is more advantageous if a matrix associated with a connected graph is irreducible, due to the
fact that the irreducible matrices have many helpful properties~\cite{cve-doob-sachs-book}. Examining
irreducibility of the Helmholtzian matrix, we give a simple criterion.

\begin{prop}
Let $G$ be a graph with edge set $E$. Then $\mathcal{H}(G)$ is reducible if and only if there is a partition
$E=E_1\cup E_2$, such that for every pair of edges $e_1\in E_1$ and $e_2\in E_2$ either
$e_1\vartriangle e_2$ or $e_1\not\sim e_2$.
\end{prop}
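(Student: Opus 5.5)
The plan is to reduce reducibility of the symmetric matrix $\mathcal{H}(G)$ to a statement about its zero pattern. Then we read off, from Definition~\ref{H-def}, exactly when an off-diagonal entry vanishes.

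The first step is the standard fact about symmetric matrices. $\mathcal{H}(G)$ is reducible if and only if its index set $E$ admits a partition $E=E_1\cup E_2$ with both parts nonempty such that $h_{e_1e_2}=0$ for all $e_1\in E_1$, $e_2\in E_2$.

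For the forward direction, take the permutation matrix $P$ from the definition of reducibility. Let $E_1$ be the set of edges indexing the rows of the block $E$ in $P^{-1}\mathcal{H}(G)P$, and let $E_2$ be the set indexing the block $F$. Since $P^{-1}=P^{\intercal}$, the conjugation only reorders rows and columns simultaneously, so the off-diagonal zero blocks say precisely that $h_{e_1e_2}=0$ across the partition.

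For the converse, order the edges so that those of $E_1$ come first. The corresponding permutation matrix $P$ then brings $\mathcal{H}(G)$ into block-diagonal form; the two off-diagonal blocks vanish by hypothesis together with symmetry. Both square blocks are nonempty, since we implicitly require $E_1,E_2\neq\emptyset$, as in the definition where $E$ and $F$ are genuine square matrices.

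The second step characterizes the zero off-diagonal entries. Let $e\neq e'$. If $e\sim e'$, the two edges share exactly one endpoint, because $G$ is simple. Under the fixed orientation they therefore satisfy exactly one of $e\leftrightarrow e'$ or $e\overset{\pm}{\sim}e'$: the shared vertex is a head of one and a tail of the other, or it is a head of both or a tail of both.

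Hence, by Definition~\ref{H-def}, $h_{ee'}\neq 0$ if and only if $e\sim e'$ and $e\not\vartriangle e'$. Equivalently, $h_{ee'}=0$ if and only if $e\not\sim e'$ or $e\vartriangle e'$. Substituting this into the first step gives the proposition.

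There is no real obstacle here. The only points needing care are the following:
\begin{itemize}
\item One must note that the orientation does not matter. This is consistent with Theorem~\ref{lem-2}, since switching preserves the zero pattern.
\item The case analysis covering $e\sim e'$ must be exhaustive. This relies on simplicity: two distinct edges share at most one vertex, so the $\pm1$ cases never conflict.
\item The partition in the statement should be understood with both $E_1$ and $E_2$ nonempty.
\end{itemize}
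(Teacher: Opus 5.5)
Your argument is correct and is essentially the paper's: the paper states this criterion without a written proof, as an immediate consequence of Definition~\ref{H-def}. Its subsequent Proposition~\ref{Hel-bukeyue} rests on exactly your observation that, for $e\neq e'$, $h_{ee'}\neq 0$ iff $e\sim e'$ and $e\not\vartriangle e'$ (i.e., iff $\{e,e'\}$ is an edge of $\Lambda_R(G)$), together with the standard fact that a symmetric matrix is reducible iff its index set splits into two nonempty parts with no nonzero cross entries; your checks that the four head/tail cases are mutually exclusive for distinct edges of a simple graph, and that both parts must be nonempty, supply precisely the details the paper leaves implicit.
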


Say, for the graph $G$ of Fig.~\ref{fig-1} we may take  $E_1=\{e_4,e_5\}$ and $E_2=E\setminus
E_1$. Then the partition $E=E_1\cup E_2$ is as in the previous proposition, and thus $\mathcal{H}(G)$ is
reducible.

We shall return to reducibility soon. At this moment we need to introduce signed graphs. A {\it signed graph} $\dot{G}$ is a graph in which every edge is either  positive or negative. Precisely, $\dot{G}$ is an ordered pair $(G,\sigma)$, where $G$ is an ordinary graph, called the underlying graph, and
$\sigma\colon E(G)\longrightarrow\{+1,-1\}$ is the signature defined on the edge set of $G$. The adjacency matrix  $A(\dot{G})$  of $\dot{G}$ is obtained from the adjacency matrix of
its underlying graph  by reversing the sign of all 1s which correspond to negative edges. See \cite{wis-dam} for more details about signed directed graphs.


For a graph $G$ with an
orientation on $E(G)$, let $\Lambda(G)$ denotes the following signed graph with loops:
\begin{enumerate}
\item[(a)] the vertex set is $V=E(G)$;
\item[(b)] there are $\triangle(e)+2$ loops with sign $1$ attached at every vertex $e$;
\item[(c)] the set of edges with sign $1$ is $E^+=\{\{e,e'\}\mid e\overset{\pm}{\sim} e'
    \text{ and }e\not\vartriangle e'\}$;
\item[(d)] the set of edges with sign $-1$ is $E^-=\{\{e,e'\}\mid e\leftrightarrow
    e'\text{ and }e\not\vartriangle e'\}$.
\end{enumerate}

The previous signed graph can be viewed as a combination of the Gallai graph and the standard signed line graph \cite{le}. Apparently, there are several definitions of a signed line graph, and a short discussion on them can be found in \cite{BSZ}. The adjacency matrix $A(\Lambda(G)) = (a_{ij}) $  is given by
$$
a_{ij}=
\begin{cases}
	\triangle(e)+2, & \mbox{if $i=j$}  \\
	1, & \mbox{if $ij \in E(\Lambda(G))$ with sign 1}  \\
	-1, & \mbox{if $ij \in E(\Lambda(G))$ with sign $-1$}\\
	0, & \text{otherwise}.
\end{cases}
$$

By deleting the loops from $\Lambda(G)$, we arrive at its reduction, denoted by $\Lambda_R(G)$. In the light of Theorem~\ref{H-def}, we have \begin{equation}\label{eq:rel}\mathcal{H}(G)=A(\Lambda(G))=A(\Lambda_R(G))+\mathcal{D}(G),\end{equation} where $\mathcal{D}(G)=\diag(\triangle(e)+2\mid e\in E(G))$.

It is well-known that
the $A(\Lambda(G))$ is irreducible if and only if  $\Lambda(G)$ is connected; i.e., if and only if  $\Lambda_R(G)$ is connected. Therefore, the next result follows immediately.

\begin{prop}\label{Hel-bukeyue}
For a graph $G$, the  Helmholtzian $\mathcal{H}(G)$ is irreducible if and only if
$\Lambda_R(G)$ is connected.
\end{prop}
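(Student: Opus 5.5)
The plan is to translate irreducibility of $\mathcal{H}(G)$ into a connectivity statement about the graph whose adjacency pattern is given by the nonzero off-diagonal entries of $\mathcal{H}(G)$. By \eqref{eq:rel}, $\mathcal{H}(G)=A(\Lambda_R(G))+\mathcal{D}(G)$ with $\mathcal{D}(G)$ diagonal. Conjugating by a permutation matrix $P$ sends diagonal matrices to diagonal matrices. Hence $P^{-1}\mathcal{H}(G)P$ is block diagonal exactly when $P^{-1}A(\Lambda_R(G))P$ is block diagonal with the same block sizes. So it suffices to prove the standard fact that a symmetric matrix with zero diagonal and entries in $\{0,\pm1\}$ is reducible if and only if the associated (unsigned) graph is disconnected. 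The signs play no role, since only the zero/nonzero pattern matters.

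\emph{Disconnected implies reducible.} If $\Lambda_R(G)$ is disconnected, I partition its vertex set $E(G)=E_1\cup E_2$ with $E_1$ a union of components and both parts nonempty. I then order the rows and columns so that $E_1$ comes first; the corresponding permutation matrix is $P$. Every entry $h_{e_1e_2}$ with $e_1\in E_1$ and $e_2\in E_2$ is off-diagonal. It is nonzero only if $\{e_1,e_2\}\in E^+\cup E^-$, and that would be an edge joining two different components, which is impossible. Thus $P^{-1}\mathcal{H}(G)P$ has the required block form.

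\emph{Reducible implies disconnected.} If $P^{-1}\mathcal{H}(G)P=\begin{pmatrix}E&O\\O&F\end{pmatrix}$, the permutation splits the index set $E(G)$ into two nonempty sets $E_1$ and $E_2$, and every entry of $\mathcal{H}(G)$ between them vanishes. By Definition~\ref{H-def}, such an entry vanishes exactly when neither $e_1\overset{\pm}{\sim}e_2$ nor $e_1\leftrightarrow e_2$ holds together with $e_1\not\vartriangle e_2$. That is precisely the condition that $\{e_1,e_2\}$ is not an edge of $\Lambda_R(G)$. So no edge of $\Lambda_R(G)$ crosses the partition, and $\Lambda_R(G)$ is disconnected.

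The only point needing care, and it is mild, is that the off-diagonal support of $\mathcal{H}(G)$ coincides exactly with $E^+\cup E^-$. In particular, no cancellation can occur, because each pair $e\neq e'$ receives a single value from Definition~\ref{H-def}. One also has to make sure the diagonal loops are irrelevant, which they are since loops never connect distinct vertices. This is also consistent with the previous proposition: a partition with $e_1\vartriangle e_2$ or $e_1\not\sim e_2$ for all crossing pairs is exactly a partition with no crossing edge of $\Lambda_R(G)$.
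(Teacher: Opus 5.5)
Your proof is correct and takes the same route as the paper. The paper cites the identity $\mathcal{H}(G)=A(\Lambda(G))=A(\Lambda_R(G))+\mathcal{D}(G)$ and the well-known fact that a symmetric matrix is irreducible exactly when its associated graph is connected, with loops irrelevant; you have written out that standard fact explicitly, including the correct observation that the off-diagonal support of $\mathcal{H}(G)$ is exactly $E^+\cup E^-$.
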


The identities \eqref{eq:rel} enable us to transfer the consideration to the domain of signed graphs, and use some known results on them. Two signed graphs $\dot{G}$ and $\dot{G}'$ are {\it switching equivalent} if there is a
switching matrix $S$ such that $A(\dot{G})=S^{-1}A(\dot{G}')S$. In other words $\mathcal{H}(G)$ and $\mathcal{H}(G')$  are switching similar if and only if $\Lambda_R(G)$ and $\Lambda_R(G')$ are switching equivalent.

We know from \cite{Zas} that  a signed graph switches to its underlying graph if and only if each cycle in it is positive (i.e., the product of its edge signed is positive).
This brings us to the following result.

\begin{thm}\label{prop:I}
If there exists an orientation on $G$ such that $\mathcal{H}(G)$ is non-negative, then $G$ has no induced odd cycle of length $\geq 5$. Conversely, if $G$ has no odd cycle of length $\geq 5$, then there exists an orientation on $G$ such that $\mathcal{H}(G)$ is non-negative.
\end{thm}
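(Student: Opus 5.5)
The plan is to turn the question into one about balance of the signed graph $\Lambda_R(G)$. By Theorem~\ref{lem-2}, reversing the edges in a set $F\subseteq E(G)$ replaces $\mathcal{H}(G)$ by $S\mathcal{H}(G)S$, where $S$ is the switching matrix with $-1$ exactly on $F$. Every switching matrix arises this way. So the Helmholtzian matrices of all orientations of $G$ form exactly one switching class, namely that of $A(\Lambda_R(G))+\mathcal{D}(G)$ by \eqref{eq:rel}. Hence some orientation makes $\mathcal{H}(G)$ non-negative if and only if $\Lambda_R(G)$, for a fixed orientation, switches to its underlying graph. By Zaslavsky's result \cite{Zas}, this holds if and only if every cycle of $\Lambda_R(G)$ is positive.

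Next I will compute the sign of a cycle of $\Lambda_R(G)$ via the incidence matrix. If $e\ne e'$ share the vertex $v$ and $e\not\vartriangle e'$, then the sign of $\{e,e'\}$ is $-b_{ev}b_{e'v}$. Indeed, $e\overset{\pm}{\sim}e'$ gives $b_{ev}=b_{e'v}$ and sign $+1$, while $e\leftrightarrow e'$ gives opposite entries and sign $-1$; this matches $\mathcal{H}=\mathcal{B}\mathcal{B}^\intercal+\mathcal{C}^\intercal\mathcal{C}$ off triangles. Take a cycle $e_1e_2\cdots e_k$ of $\Lambda_R(G)$, let $v_i$ be the common vertex of $e_i$ and $e_{i+1}$ (indices mod $k$), and regroup the product by edges. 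The sign becomes
\[
(-1)^k\prod_{i=1}^k b_{e_iv_{i-1}}b_{e_iv_i}=(-1)^{k}(-1)^{s},
\]
where $s$ is the number of indices with $v_{i-1}\ne v_i$. This is because $b_{ev}b_{ew}=-1$ when $v,w$ are the two ends of $e$, and $b_{ev}^2=1$. The vertices $v_1,\dots,v_k$, after deleting repetitions, trace a closed walk $W$ in $G$ of length $s$. Hence the cycle is negative exactly when $k-s$ is odd, i.e.\ when the number of indices with $v_{i-1}=v_i$ ("pivots" at a vertex) has parity different from $k$.

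For necessity, let $C=u_1u_2\cdots u_k$ be an induced odd cycle with $k\ge5$ and $e_i=u_iu_{i+1}$. Consecutive edges $e_i,e_{i+1}$ share $u_{i+1}$ and lie in no common triangle, since $C$ is induced. Non-consecutive edges are non-adjacent because $k\ge 5$, so $e_1\cdots e_k$ is a cycle in $\Lambda_R(G)$. Here $v_i=u_{i+1}$, so all $v_i$ are distinct and $s=k$. Then $k-s=0$ is even and the cycle would be positive, so I must recheck the sign convention. More directly, non-negativity forces every consecutive pair to satisfy $e_i\overset{\pm}{\sim}e_{i+1}$. That means the orientation alternates in direction around $C$, which is impossible for odd $k$. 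I will present this direct parity argument, which avoids convention slips.

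For the converse, assume $G$ has no odd cycle of length $\ge5$, and suppose $\Lambda_R(G)$ has a negative cycle; take one of minimum length. By the formula above, its associated closed walk $W$ in $G$ has odd length, so $W$ contains an odd cycle of $G$, which must be a triangle $xyz$. The main obstacle is to exploit the condition that consecutive $e_i,e_{i+1}$ are not in a common triangle to shorten the negative cycle, contradicting minimality. I would first try the following local surgery: near the triangle, insert or remove the triangle edge whose relation to both neighbours is forced, using that two edges of a triangle are never adjacent in $\Lambda_R$. This should show the odd part of $W$ can be rerouted through a longer odd cycle or eliminated. If that fails, I would fall back on a direct constructive argument: build the orientation on the blocks of $G$. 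A graph with no odd cycles of length $\ge5$ has blocks that are either bipartite or very constrained (triangle-rich). Orient bipartite blocks from one side to the other, orient the remaining blocks by checking that all their non-triangle adjacencies can be made head-to-head or tail-to-tail, and glue the blocks at cut vertices.
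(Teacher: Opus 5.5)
You have the reduction right. Reversing the edges in $F$ conjugates $\mathcal{H}(G)$ by the switching matrix supported on $F$, so a non-negative orientation exists if and only if $\Lambda_R(G)$ is balanced. There is, however, a sign slip. The edge $\{e,e'\}$ through $v$ has sign $b_{ev}b_{e'v}=(\mathcal{B}\mathcal{B}^\intercal)_{ee'}$, not $-b_{ev}b_{e'v}$; your own case check contradicts the minus sign. Hence a cycle $e_1\cdots e_k$ of $\Lambda_R(G)$ has sign $(-1)^s$: it is negative exactly when its closed walk $W$ has odd length, not when $k-s$ is odd. With the corrected formula, necessity is immediate, since $s=k$ is odd for an induced $C_k$ with $k\ge 5$ odd. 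Your direct parity argument for necessity is also correct. It is essentially the paper's argument, which orients $C$ cyclically so that all $k$ edges of the corresponding cycle in $\Lambda_R(G)$ are negative.

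The converse is the real gap, and it cannot be closed, because the converse as stated is false. Take the net graph $N$: a triangle $xyz$ with pendant edges $xx'$, $yy'$, $zz'$. Its only cycle is the triangle, so the hypothesis holds. The edge $xx'$ lies in no triangle with $xy$ or $xz$, so non-negativity forces $xx'$, $xy$, $xz$ to play the same role at $x$; that is, $x$ must be a source or a sink. The same holds for $y$ and $z$, which is impossible on a triangle. In signed-graph terms, $\Lambda_R(N)$ is the $6$-cycle $xx',xy,yy',yz,zz',zx$, whose closed walk is the triangle ($s=3$), so it is negative for every orientation.

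This is exactly where your block-gluing fallback breaks. At a cut vertex $v$, every edge of one block at $v$ is $\Lambda_R$-adjacent to every edge of another block at $v$, since they cannot share a triangle. So all edges at $v$ must point the same way relative to $v$, and a triangle block whose three vertices are all cut vertices admits no such orientation. The local-surgery idea fails too, since the minimal negative cycle is all of $\Lambda_R(N)$.

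The paper's proof of the converse has the same hole. It asserts that an induced cycle of $\Lambda_R(G)$ of length $\ge 4$ comes from a cycle of $G$. That overlooks the case where two non-consecutive members, here $xy$ and $yz$, share a vertex of $G$ while lying in a common triangle.

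What your corrected formula does yield is a true criterion: a non-negative orientation exists if and only if every cycle of $\Lambda_R(G)$ projects to a closed walk of even length in $G$. The hypothesis that $G$ has no odd cycle of length $\ge 5$ does not imply this.
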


\begin{proof}
Observe that every triangle in $\Lambda_R(G)$ arises from a triple of edges sharing the same vertex. It follows by definition that such a triangle is positive. Indeed, if all edges are oriented to the common vertex then all the edges in the triangle are positive, and reversing the orientation of an edge in a triple, changes the sign of exactly two edges in the triangle, which remains it positive.

We claim that, for each induced cycle $C$ with edges $e_1,\ldots,e_k$ where $k\ge 4$ of $G$, the vertices $e_1,\ldots,e_k$ form an induced cycle in $\Lambda_R(G)$. Since $e_i\not\vartriangle e_{i+1}$ and $e_i\sim e_{i+1}$ for $1\le i\le k$ where $e_{k+1}=e_1$, $e_i$ and $e_{i+1}$ are adjacent in $\Lambda_R(G)$ as vertices. Hence, the vertices $e_1,\ldots,e_k$ form a cycle in $\Lambda_R(G)$, denoted by $\tilde{C}$. Moreover, if there is a chord edge $e_ie_j$ with $|i-j|\ge 2$, then $e_i\sim e_j$. Therefore, either $e_i,e_{i+1},e_j$ share a common vertex in $C$ or $e_i,e_{i-1},e_j$ share a common vertex in $C$. This means that there is a vertex in $C$ with degree $\ge 3$ in $C$, which is impossible. Hence, $\tilde{C}$ is an induced cycle of length $k$. Conversely, for each induced cycle $C'$ with vertices $e_1,\ldots,e_k$ where $k\ge 4$ in $\Lambda_R(G)$, the edges $e_1,\ldots,e_k$ forms a cycle in $G$. Since $e_i$ and $e_{i+1}$ are adjacent in $\Lambda_R(G)$, we get $e_i\sim e_{i+1}$ and $e_i\not\vartriangle e_{i+1}$ in $G$. Moreover, there is no edge $e_j\in\{e_1,\ldots,e_{k}\}\setminus \{e_{i-1},e_{i+1}\}$ such that $e_i\sim e_j$. Otherwise, the vertices $e_j,e_i,e_{i-1}$ or $e_j,e_j,e_{i+1}$ would form a triangle in $\Lambda_R(G)$, which is impossible because $C'$ is an induced cycle. Therefore, the edges $e_1,\ldots,e_k$ forms a cycle of length $k$ in $G$.

Assume that there is an orientation on $G$ such that $\mathcal{H}(G)$ is non-negative. Then the graph $\Lambda_R(G)$ is switching equivalent to its underlying graph. Suppose to the contrary that $G$ contains an induced cycle $C_{2k+1}$ for some $k\ge 2$. Let $\tilde{C}_{2k+1}$ be the induced cycle in $\Lambda_R(G)$ corresponding to $C$. Note that, changing the orientation of an edge $e_i$ in $C$ would change the signs of both edges $e_{i-1}e_i$ and $e_ie_{i+1}$ in $\tilde{C}$, which remains the sign of $\tilde{C}$. This indicates that the orientation of $C$ does not affect the signature of $\tilde{C}$. Without loss of generality, assume that the orientation of $C$ is clockwise. Therefore, the sign of $\tilde{C}$ is negative, a contradiction.

Conversely, assume that $G$ contains no odd cycle of length $\ge 5$. It suffices to show that $\Lambda_R(G)$ is switching equivalent to its underlying graph, that is, all induced cycles of $\Lambda_R(G)$ is positive. Suppose to the contrary that $\Lambda_R(G)$ contains a negative induced cycle $C'$. Clearly, the length of $C'$ is greater than $3$ because all $3$-cycles in $\Lambda_R(G)$ are positive. Let $C_0$ be the cycle (maybe not induced cycle) in $G$ corresponding to $C'$. Since the orientation of $C_0$ does not affect the signature of $C'$, by considering the clockwise orientation, we conclude that $C'$ is an odd cycle and so is $C_0$. This yields the contradiction.

The proof is completed.
\end{proof}

\begin{examplex}\label{exa-3}
	Observe that $\Lambda(G)$ and $\Lambda_R(G)$ are essentially different objects, as $\Lambda_R(K_3)\cong \Lambda_R(3K_2)$ but $\Lambda(K_3)\ncong \Lambda(3K_2)$.
	
Let $G_1$ and $G_2$ be oriented graphs illustrated in Fig. 2. They are obtained by assigning different orientations to the same graph. Then the signed graphs $\Lambda_R(G_1)$ and $\Lambda_R(G_2)$ are depicted in the same figure, where dotted lines are negative.  By Proposition~\ref{prop:I} they are switching equivalent.
\end{examplex}

\begin{figure}[]
	\centering
	\begin{minipage}{0.48\linewidth}
	\centerline{
		\begin{tikzpicture}[scale=0.8,vertex1_style/.style={circle,draw,minimum size=0.14cm,inner sep=0pt,fill=black}
			]
			\begin{scope}[xshift=3em]
				\node[vertex1_style, label=below left:\small] (v0) at (0,0) {} ;
				\node[vertex1_style, label=above:\small] (T1) at (-2,1){} ;
				\node[vertex1_style, label=above:\small] (T2) at (-2.2,0) {};
				\draw[decoration={markings, mark = at position 0.6 with {\arrow{Stealth[scale=0.9]}}}, postaction={decorate}] (v0) -- (T1) node[midway,sloped,above] {$e_1$};
				\draw[decoration={markings, mark = at position 0.6 with {\arrow{Stealth[scale=0.9]}}}, postaction={decorate}] (T1) -- (T2) node[midway,sloped,above] {$e_7$};
				\draw[decoration={markings, mark = at position 0.5 with {\arrow{Stealth[scale=0.9]}}}, postaction={decorate}] (T2) -- (v0) node[midway,sloped,below] {$e_2$};				
				\node[vertex1_style, label=above:\small] (T3) at (2,1){} ;
				\node[vertex1_style, label=above:\small] (T4) at (2.2,0) {};
				\draw[decoration={markings, mark = at position 0.5 with {\arrow{Stealth[scale=0.9]}}}, postaction={decorate}] (T3) -- (v0) node[midway,sloped,above] {$e_6$};
				\draw[decoration={markings, mark = at position 0.6 with {\arrow{Stealth[scale=0.9]}}}, postaction={decorate}] (T4) -- (T3) node[midway,sloped,above] {$e_9$};
				\draw[decoration={markings, mark = at position 0.6 with {\arrow{Stealth[scale=0.9]}}}, postaction={decorate}] (v0) -- (T4) node[midway,sloped,below] {$e_5$};				
				\node[vertex1_style, label=above:\small] (T5) at (-0.5,-2){} ;
				\node[vertex1_style, label=above:\small] (T6) at (0.5,-2) {};
				\draw[decoration={markings, mark = at position 0.6 with {\arrow{Stealth[scale=0.9]}}}, postaction={decorate}] (v0) -- (T5) node[midway,sloped,above] {$e_3$};
				\draw[decoration={markings, mark = at position 0.6 with {\arrow{Stealth[scale=0.9]}}}, postaction={decorate}] (T5) -- (T6) node[midway,sloped,above] {$e_8$};
				\draw[decoration={markings, mark = at position 0.5 with {\arrow{Stealth[scale=0.9]}}}, postaction={decorate}] (T6) -- (v0) node[midway,sloped,above] {$e_4$};
			\end{scope}
		\end{tikzpicture}
	}
		\centerline{$G_1$}
	\end{minipage}
\begin{minipage}{0.48\linewidth}
	\centerline{
\begin{tikzpicture}[vertex1_style/.style={circle,draw,minimum size=0.14cm,inner sep=0pt,fill=black}
	]
	\begin{scope}[xshift=3em]
		\node[vertex1_style, label=above :\small$e_1$] (v1) at (-1.0,2.0) {} ;
		\node[vertex1_style, label=below left:\small$e_2$] (v2) at (-2.0,1){} ;
		\node[vertex1_style, label=below left:\small$e_3$] (v3) at (-1.0,0) {};
		\node[vertex1_style, label=below right:\small$e_4$] (v4) at (1.0,0) {} ;
		\node[vertex1_style, label=above:\small$e_5$] (v5) at (2.0,1) {};
		\node[vertex1_style, label=above:\small$e_6$] (v6) at (1.0,2.0) {};
		\node[vertex1_style, label=above:\small$e_7$] (v7) at (-2.4,1.8) {} ;
		\node[vertex1_style, label=above:\small$e_{8}$] (v8) at (0,-.6) {};
		\node[vertex1_style, label=above:\small$e_{9}$] (v9) at (2.4,1.8) {};
		\draw[thick] (v1) -- (v3) ;
		\draw[thick] (v4) -- (v6) ;
		\draw[thick] (v1) -- (v5) ;
		\draw[thick] (v2) -- (v6) ;
		\draw[thick] (v2) -- (v4) ;
		\draw[thick] (v3) -- (v5) ;
		\draw[dashed] (v1) -- (v6) ;
		\draw[dashed] (v1) -- (v4) ;
		\draw[dashed] (v3) -- (v6) ;
		\draw[dashed] (v2) -- (v5) ;
		\draw[dashed] (v2) -- (v3) ;
		\draw[dashed] (v4) -- (v5) ;
	\end{scope}
\end{tikzpicture}
}
\centerline{$\Lambda_R(G_1)$}
\end{minipage}
	\label{fig22}
\begin{minipage}{0.48\linewidth}
		\centerline{
			\begin{tikzpicture}[scale=0.8,vertex1_style/.style={circle,draw,minimum size=0.14cm,inner sep=0pt,fill=black}
				]
				\begin{scope}[xshift=3em]
					\node[vertex1_style, label=below left:\small] (v0) at (0,0) {} ;
					\node[vertex1_style, label=above:\small] (T1) at (-2,1){} ;
					\node[vertex1_style, label=above:\small] (T2) at (-2.2,0) {};
					\draw[decoration={markings, mark = at position 0.6 with {\arrow{Stealth[scale=0.9]}}}, postaction={decorate}] (v0) -- (T1) node[midway,sloped,above] {$e_1$};
					\draw[decoration={markings, mark = at position 0.6 with {\arrow{Stealth[scale=0.9]}}}, postaction={decorate}] (T1) -- (T2) node[midway,sloped,above] {$e_7$};
					\draw[decoration={markings, mark = at position 0.5 with {\arrow{Stealth[scale=0.9]}}}, postaction={decorate}] (T2) -- (v0) node[midway,sloped,below] {$e_2$};					
					\node[vertex1_style, label=above:\small] (T3) at (2,1){} ;
					\node[vertex1_style, label=above:\small] (T4) at (2.2,0) {};
					\draw[decoration={markings, mark = at position 0.5 with {\arrow{Stealth[scale=0.9]}}}, postaction={decorate}] (v0) -- (T3) node[midway,sloped,above] {$e_6$};
					\draw[decoration={markings, mark = at position 0.6 with {\arrow{Stealth[scale=0.9]}}}, postaction={decorate}] (T4) -- (T3) node[midway,sloped,above] {$e_9$};
					\draw[decoration={markings, mark = at position 0.6 with {\arrow{Stealth[scale=0.9]}}}, postaction={decorate}] (v0) -- (T4) node[midway,sloped,below] {$e_5$};					
					\node[vertex1_style, label=above:\small] (T5) at (-0.5,-2){} ;
					\node[vertex1_style, label=above:\small] (T6) at (0.5,-2) {};
					\draw[decoration={markings, mark = at position 0.6 with {\arrow{Stealth[scale=0.9]}}}, postaction={decorate}] (T5) -- (v0) node[midway,sloped,above] {$e_3$};
					\draw[decoration={markings, mark = at position 0.6 with {\arrow{Stealth[scale=0.9]}}}, postaction={decorate}] (T6) -- (T5) node[midway,sloped,above] {$e_8$};
					\draw[decoration={markings, mark = at position 0.5 with {\arrow{Stealth[scale=0.9]}}}, postaction={decorate}] (v0) -- (T6) node[midway,sloped,above] {$e_4$};
				\end{scope}
			\end{tikzpicture}
		}
		\centerline{$G_2$}
	\end{minipage}
	\begin{minipage}{0.48\linewidth}
		\centerline{
			\begin{tikzpicture}[vertex1_style/.style={circle,draw,minimum size=0.14cm,inner sep=0pt,fill=black}
				]
				\begin{scope}[xshift=3em]
					\node[vertex1_style, label=above :\small$e_1$] (v1) at (-1.0,2.0) {} ;
					\node[vertex1_style, label=below left:\small$e_2$] (v2) at (-2.0,1){} ;
					\node[vertex1_style, label=below left:\small$e_3$] (v3) at (-1.0,0) {};
					\node[vertex1_style, label=below right:\small$e_4$] (v4) at (1.0,0) {} ;
					\node[vertex1_style, label=above:\small$e_5$] (v5) at (2.0,1) {};
					\node[vertex1_style, label=above:\small$e_6$] (v6) at (1.0,2.0) {};
					\node[vertex1_style, label=above:\small$e_7$] (v7) at (-2.4,1.8) {} ;
					\node[vertex1_style, label=above:\small$e_{8}$] (v8) at (0,-.6) {};
					\node[vertex1_style, label=above:\small$e_{9}$] (v9) at (2.4,1.8) {};
					\draw[thick] (v1) -- (v3) ;
					\draw[thick] (v4) -- (v6) ;
					\draw[thick] (v1) -- (v5) ;
					\draw[thick] (v2) -- (v6) ;
					\draw[thick] (v2) -- (v4) ;
					\draw[dashed] (v3) -- (v5) ;
					\draw[thick] (v1) -- (v6) ;
					\draw[thick] (v1) -- (v4) ;
					\draw[dashed] (v3) -- (v6) ;
					\draw[thick] (v2) -- (v5) ;
					\draw[dashed] (v2) -- (v3) ;
					\draw[thick] (v4) -- (v5) ;
				\end{scope}
			\end{tikzpicture}
		}
		\centerline{$\Lambda_R(G_2)$}
	\end{minipage}
	\label{222}
	\caption{\small The signed graphs $\Lambda_R(G_1)$ and $\Lambda_R(G_2)$ obtained from $G_1$ and $G_2$ sharing the same underlying graph.}
\end{figure}

The following result follows from the  Perron-Frobenius Theory on irreducible matrices and the previous propositions.

\begin{prop}\label{lem-5}
Let $G$ be a graph with no induced odd cycle with length greater than $3$. Then there is an
orientation such that $\mathcal{H}(G)$ has a non-negative eigenvector $\mathbf{x}$
corresponding to the largest eigenvalue. Moreover, $\mathbf{x}$ is positive if
$\Lambda_R(G)$ is connected.
\end{prop}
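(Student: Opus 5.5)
The plan is to reduce the statement to Theorem~\ref{prop:I} followed by the Perron--Frobenius theorem. First, we want an orientation of $G$ for which $\mathcal{H}(G)$ is entrywise non-negative. Once we have it, $\mathcal{H}(G)$ is a real symmetric non-negative matrix. By Perron--Frobenius it then has a non-negative eigenvector $\mathbf{x}$ for its spectral radius $\rho$. Since $\mathcal{H}(G)$ is positive semi-definite, all its eigenvalues lie in $[0,\rho]$, so $\rho=\lambda_1(G)$ and $\mathbf{x}$ belongs to the largest eigenvalue. For the second claim, if $\Lambda_R(G)$ is connected then $\mathcal{H}(G)$ is irreducible by Proposition~\ref{Hel-bukeyue}. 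The irreducible form of Perron--Frobenius then gives a positive eigenvector for $\rho$. Connectivity of $\Lambda_R(G)$ does not depend on the orientation: reorienting only switches signs (Theorem~\ref{lem-2}) and leaves the underlying graph unchanged. So it does not matter which orientation we fixed.

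The remaining task is to produce the non-negative orientation under the weaker hypothesis that $G$ has no \emph{induced} odd cycle of length $\ge5$. Theorem~\ref{prop:I} assumes no odd cycle of length $\ge 5$ at all, so we cannot quote it directly. Instead we rerun its converse argument.

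By Zaslavsky's criterion it suffices to show that every induced cycle $C'$ of $\Lambda_R(G)$ is positive. Triangles of $\Lambda_R(G)$ are positive, as shown in the proof of Theorem~\ref{prop:I}. For length $k\ge4$, that proof shows $C'$ comes from a cycle $C_0=e_1\cdots e_k$ in $G$ in which consecutive edges $e_i,e_{i+1}$ are not in a common triangle. Orienting $C_0$ cyclically makes every edge of $C'$ negative, so $C'$ has sign $(-1)^k$. We therefore need to rule out $k$ odd.

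The key step is to show: if $G$ has an odd cycle $C_0$ of length $\ge5$ in which no two consecutive edges lie in a triangle, then $G$ has an induced odd cycle of length $\ge5$. Choose such a cycle $C_0$ of minimum length. If $C_0$ is induced, we are done. Otherwise take a chord $f$. It splits $C_0$ into two cycles of lengths $a+1$ and $b+1$ with $a+b=k$, so exactly one of them, say $D$, is odd, and $D$ is shorter than $C_0$. The cycle $D$ cannot be a triangle: its two edges from $C_0$ would then be consecutive in $C_0$ and lie in the triangle $D$, which is forbidden. So $D$ has length $\ge5$.

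To contradict minimality, $D$ must itself avoid consecutive edges lying in a common triangle. Only the two pairs involving $f$ are in question. This is the step I expect to be the main obstacle. First I would choose $f$ to be a chord whose odd side is shortest possible. I would then argue that a triangle on $f$ and a neighbouring edge produces a chord of $C_0$ with an even shorter odd side, or a shorter admissible odd cycle. If that bookkeeping fails, I would instead run induction directly on the cycle $C'$ in $\Lambda_R(G)$, using that $C'$ is induced there.
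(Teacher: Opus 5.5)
Your Perron--Frobenius paragraph is correct, and it is essentially all the paper offers: it says the result follows from Theorem~\ref{prop:I} and Perron--Frobenius. You were right that Theorem~\ref{prop:I} only covers graphs with no odd cycle of length $\ge 5$ at all. The gap is that the bridge you want to build does not exist. Under the stated hypothesis there need not be any orientation making $\mathcal{H}(G)$ non-negative, i.e.\ $\Lambda_R(G)$ need not be balanced. Two steps fail.

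\emph{The imported reduction.} You take from the proof of Theorem~\ref{prop:I} that an induced cycle of length $k\ge 4$ in $\Lambda_R(G)$ comes from a $k$-cycle of $G$, and so has sign $(-1)^k$. This is false there as well. Three consecutive vertices $e_{i-1},e_i,e_{i+1}$ of an induced cycle may be edges through one vertex of $G$, with $e_{i-1},e_{i+1}$ lying in a common triangle. In general a cycle $e_1\cdots e_k$ of $\Lambda_R(G)$ has sign $(-1)^s$, where $s$ counts the $e_i$ that meet $e_{i-1}$ and $e_{i+1}$ at different end-vertices. Take the net: the triangle $abc$ with pendant edges $aa',bb',cc'$. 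Its $\Lambda_R$ is exactly the $6$-cycle $ab,bb',bc,cc',ca,aa'$, with $s=3$, so it is negative. The net has no odd cycle of length $\ge 5$ whatsoever, yet no orientation makes $\mathcal{H}$ non-negative. So the converse half of Theorem~\ref{prop:I} fails as well.

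\emph{Your key lemma.} It is false even for genuine cycles. In $\overline{C_7}$ (vertex set $\mathbb{Z}_7$, $i\sim j$ iff $j-i\in\{\pm2,\pm3\}$), the cycle $0,3,6,2,5,1,4$ has length $7$. Any two consecutive edges have outer ends differing by $\pm1$, so they lie in no triangle. Its edges form a negative induced $7$-cycle of $\Lambda_R(\overline{C_7})$. Yet $\overline{C_7}$ has no induced odd cycle of length $\ge 5$, since an induced $C_5$ would give an induced $\overline{C_5}\cong C_5$ in $C_7$. So the chord bookkeeping you flagged cannot be completed.

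What survives needs no hypothesis at all. By Theorem~\ref{lem-2}, reversing a set $F$ of edges replaces $\mathcal{H}(G)$ by $S_F\mathcal{H}(G)S_F$. So for any real $\lambda_1$-eigenvector $\mathbf{x}$, reversing the edges with $x_e<0$ makes $|\mathbf{x}|$ an eigenvector. This proves the first assertion for every graph.

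The same trick gives a positive eigenvector only if the $\lambda_1$-eigenspace contains a vector with no zero entry. This can fail with $\Lambda_R(G)$ connected, so the ``moreover'' part is not provable as stated. Take the triangle $abc$ with pendant edges $aa_1,aa_2,bb_1,cc_1$; its only cycle is the triangle, and $\Lambda_R(G)$ is connected. By \eqref{eq:H'}, the triangle block contributes only the eigenvalue $3$. Hence $\lambda_1(G)$ is the largest Laplacian eigenvalue $\mu_1\approx 5.16$, the largest root of $\mu^3-8\mu^2+16\mu-7$. It is simple, and its eigenvector $y$ is invariant under the automorphism exchanging $b,b_1$ with $c,c_1$; the anti-invariant vectors only reach $(5+\sqrt{13})/2\approx 4.30$. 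So the $\lambda_1$-eigenspace of $\mathcal{H}(G)$ is spanned by $\mathcal{B}y$, whose $bc$-entry $\pm(y_b-y_c)$ is $0$ under every orientation.

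The positivity claim therefore needs the extra assumption that $\Lambda_R(G)$ is balanced. Under that assumption, your first paragraph is a complete proof.
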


A graph $G$ is called {\it $k$-triangle-regular} if
$\triangle(e)=k$ holds for every edge $e \in E(G)$. Therefore, in this case, the eigenvalues of $\mathcal{H}(G)$
are those of $\Lambda_R(G)$ added $k+2$. This enables us to give more examples in which we compute the $\mathcal{H}$-spectrum of a particular graph.

\begin{examplex}We know from \cite{SiSt} that the spectrum of a negative signed cycle of odd length $n$ contains the numbers $2\cos \frac{\pi j}{n}$, $j\in\{1, 3, \ldots, n-1\}$, all with multiplicity 2, and $-2$ as an additional eigenvalue. Therefore, for $n\geq 3$, by employing \eqref{eq:rel} we obtain
	$$
	{\Sp}_{\mathcal{H}}(C_n) =
	\left\{\!\!\begin{array}{cccccc}
		2\cos\frac{\pi}{n}+2  & 2\cos\frac{3\pi}{n}+2 & \cdots & 2\cos\frac{(n-1)\pi}{n}+2& 0
		\\
		2 & 2 & \cdots & 2&1
	\end{array}
	\!\!\right\}.
	$$
Of course, the $\mathcal{H}$-spectrum of a triangle is computed as in Example~\ref{ex-1}, whereas the spectrum of any even cycle is computed on the basis of the spectrum its adjacency matrix (to be found in \cite[p.~53]{cve-doob-sachs-book}) and Proposition~\ref{prop:I}.
\end{examplex}

For the graphs $G$ and $H$, $G\vee H$ denotes their \textit{join}, i.e., the graph obtained by inserting an edge between every vertex of $G$ and every vertex of $H$.

\begin{examplex}\label{exa:friend} The \textit{friendship graph} is the graph $K_1\vee nK_2$, i.e., it is  the cone over $n$ disjoint edges. Now, $\Lambda_R(K_1\vee nK_2)$ switches to its underlying graph for any edge orientation on $K_1\vee nK_2$, by Proposition~\ref{prop:I}. Therefore, we may assume that the edges of $\Lambda_R(K_1\vee nK_2)$ are positive. In this case, $\Lambda_R(K_1\vee nK_2)$ consists of the complementary graph of $nK_2$ (the so-called  cocktail party graph on $2n$ vertices) and the set of $n$ isolated vertices. Its spectrum is comprised of $2n-2$, 0 with multiplicity $2n$ and $-2$ with multiplicity $n-1$. Since the friendship graph is $1$-triangle-regular, from \eqref{eq:rel} we obtain	
	$$
	{\Sp}_{\mathcal{H}}(K_1\vee nK_2) =
	\left\{\!\!\begin{array}{ccc}
		2n+1 & 3 & 1
		\\
		1  & 2n  & n-1
	\end{array}
	\!\!\right\}.
	$$
\end{examplex}

\begin{examplex}\label{exa:icosa}The \textit{icosahedron graph} $G$ is one of the 5 Platonic solids. It has 12 vertices and 30 edges, and every edge is contained in exactly 2 triangles. It has no induced odd cycles of length 5 or more, and therefore $\Lambda_R(G)$ is an all-positive signed graph obtained from the line graph of $G$ by deleting the edges that correspond to triangles in~$G$. It $\mathcal{H}$-spectrum is computed easily and reads	
$$
{\Sp}_{\mathcal{H}}(G) =
\left\{\!\!\begin{array}{cccccc}
	8 & 6 & 4&3&2&1
	\\
	1  & 11  & 5&4&5&4
\end{array}
\!\!\right\}.
$$	
\end{examplex}

It is not easy to determine whether a signed graph, say $\dot{H}$, corresponds to a graph $G$ in the sense that $\dot{H}\cong \Lambda_R(G)$, which brings us to the following problem.

\begin{problem}
Find a sufficient and a necessary condition for a signed graph to be $\Lambda_R(G)$ for some graph $G$.
\end{problem}


\section{Interlacing Theorem for $\mathcal{H}$-eigenvalues}\label{H42}

 For a
Hermitian matrix $M$ of order $n$, let
\begin{equation*}\label{her-eigen}
\theta_1(M)\geq\theta_2(M)\geq\cdots\geq\theta_n(M),
\end{equation*}
denote its eigenvalues. The following result is well-known.

\begin{thm}[{\cite[Interlacing Theorem]{god-roy}}]\label{cau-inter}
Let $M$ be a Hermitian matrix of order $m$, and let $N$ be its principal submatrix of order
$n$ $(1 \leq n \leq m)$. For every integer $1 \leq i \leq n$, it holds $\theta_{m-n+i}(M) \leq
\theta_i(N) \leq \theta_i(M)$.
\end{thm}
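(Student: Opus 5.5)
The statement to prove is Cauchy's Interlacing Theorem itself (Theorem~\ref{cau-inter}). I would prove it with the Courant--Fischer min-max characterization of eigenvalues of a Hermitian matrix. For a Hermitian $M$ of order $m$,
$$\theta_i(M)=\max_{\dim U=i}\ \min_{0\ne x\in U}\frac{x^*Mx}{x^*x}=\min_{\dim W=m-i+1}\ \max_{0\ne x\in W}\frac{x^*Mx}{x^*x}.$$
If needed, I would first derive this from the spectral theorem: write $M=\sum_j\theta_j u_ju_j^*$ with an orthonormal eigenbasis. Then any $i$-dimensional subspace meets $\mathrm{span}(u_i,\dots,u_m)$ nontrivially, and on that intersection the Rayleigh quotient is at most $\theta_i$. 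The value $\theta_i$ is attained on $\mathrm{span}(u_1,\dots,u_i)$. The min-max form follows dually.

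Next, after a permutation similarity (which changes no eigenvalues), I may assume $N$ is the leading $n\times n$ block of $M$. Let $P$ be the $m\times n$ matrix $\binom{I_n}{O}$, so that $N=P^*MP$. Since $P$ is an isometry, $y^*Ny/y^*y=(Py)^*M(Py)/(Py)^*(Py)$. Each subspace $U\subseteq\mathbb{C}^n$ of dimension $i$ maps to $PU\subseteq\mathbb{C}^m$ of the same dimension.

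For the upper bound, the max-min formula for $N$ is a maximum over a subfamily of the $i$-dimensional subspaces of $\mathbb{C}^m$, namely those of the form $PU$. Hence $\theta_i(N)\le\theta_i(M)$.

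For the lower bound, I use the min-max formula: $\theta_i(N)=\min_{W}\max_{y\in W}$ over $W\subseteq\mathbb{C}^n$ of dimension $n-i+1$. The image $PW$ has dimension $n-i+1=m-(m-n+i)+1$, which is exactly the dimension appearing in the min-max formula for $\theta_{m-n+i}(M)$. The minimum over the larger family of all such subspaces of $\mathbb{C}^m$ is no bigger than over the subfamily $\{PW\}$, so $\theta_{m-n+i}(M)\le\theta_i(N)$.

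The only real work is the Courant--Fischer step, specifically the dimension count $\dim U+\dim\mathrm{span}(u_i,\dots,u_m)>m$ that forces a nonzero intersection. Everything else is a restriction-of-feasible-set argument.
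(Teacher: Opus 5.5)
Your argument is correct and complete. The permutation-similarity reduction to a leading block is sound, the isometry identity $y^{*}Ny/y^{*}y=(Py)^{*}M(Py)/(Py)^{*}(Py)$ holds, and the index bookkeeping $n-i+1=m-(m-n+i)+1$ in the min--max step is right. So both inequalities follow as you describe by restricting the feasible family of subspaces.

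The paper gives no proof of this statement; it quotes it as a well-known result from Godsil and Royle. Your Courant--Fischer proof is the standard one.

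The cited source runs the same Rayleigh-quotient dimension count directly, without stating Courant--Fischer. It picks a nonzero vector in the span of the top $i$ eigenvectors of the compressed matrix that is orthogonal to the pulled-back top $i-1$ eigenvectors of $M$, which gives $\theta_i(N)\le\theta_i(M)$. It then gets the lower bound by applying this to $-M$, instead of using the min--max form as you do.

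Your proof, like that one, uses only $P^{*}P=I_n$. It therefore gives interlacing for every compression $P^{*}MP$ with $P$ an isometry, not just for principal submatrices.
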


For a subset $S\subseteq V(G)$, the {\it induced subgraph} $G[S]$ is the graph whose  vertex set is $S$ and whose edge set consists of all edges in $E(G)$ with both endpoints in $S$. In other words, $G[S]$ is obtained from $G$ by deleting all vertices not in $S$ together with their incident edges. Note that the adjacency matrix $A(G[S])$ of an induced subgraph $G[S]$ must be a principal submatrix of the adjacency matrix $A(G)$ of $G$. This establishes the famous Interlacing Theorem as one of the most powerful tool to investigate  spectrum of the adjacency matrix. Despite the Helmholtzian $\mathcal{H}(G[S])$ of $G[S]$ may not be a principal submatrix of $\mathcal{H}(G)$, some  results can be offered by applying the Interlacing Theorem.

For an induced subgraph $G'$ of $G$ and an edge $e\in E(G')$, denote by $\kappa_{G'}(e)=\triangle_G(e)-\triangle_{G'}(e)$, $\kappa_{\min}(G')=\min\{\kappa_{G'}(e)\mid e\in E(G')\}$ and $\kappa_{\max}(G')=\max\{\kappa_{G'}(e)\mid e\in E(G')\}$.

\begin{thm}\label{thm-4-8}
Let $G$ be a graph with $\mathcal{H}$-eigenvalues $\lambda_1\ge\lambda_2\ge \cdots \ge \lambda_m$. If $G'$ is an induced subgraph of $G$ with $\mathcal{H}$-eigenvalues $\lambda_1'\ge\lambda_2'\ge\cdots\ge\lambda_{m'}'$, then $\lambda_{i}\ge\lambda_i'+\kappa_{\min}(G')$ and $\lambda_{i'}+\kappa_{\max}(G')\ge \lambda_{m-m'+i}$ for $1\le i\le m'$.
\end{thm}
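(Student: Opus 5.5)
The plan is to realize a shifted copy of $\mathcal{H}(G')$ as a principal submatrix of $\mathcal{H}(G)$, and then combine Theorem~\ref{cau-inter} with a Weyl-type bound for adding a diagonal matrix. Fix any orientation of $E(G)$ and give $G'$ the restricted orientation. By Theorem~\ref{lem-2} neither $\mathcal{H}$-spectrum depends on the choice of orientation, so this costs nothing. Let $N$ be the principal submatrix of $\mathcal{H}(G)$ whose rows and columns are indexed by $E(G')\subseteq E(G)$.

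The key step is to prove that
\[
N=\mathcal{H}(G')+K,\qquad K=\diag\bigl(\kappa_{G'}(e)\mid e\in E(G')\bigr).
\]
For two distinct edges $e,e'\in E(G')$ we compare Definition~\ref{H-def} in $G$ and in $G'$.
\begin{itemize}
\item The relations $e\sim e'$, $e\leftrightarrow e'$ and $e\overset{\pm}{\sim}e'$ depend only on the endpoints and orientations of $e$ and $e'$, so they are identical in $G$ and $G'$.
\item The only delicate relation is $e\vartriangle e'$. If $e$ and $e'$ lie in a common triangle of $G$, the third edge joins two endpoints of $e\cup e'$. Those endpoints lie in $V(G')$, and $G'$ is induced, so that third edge lies in $G'$ and the triangle belongs to $G'$. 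The converse direction is trivial.
\end{itemize}
Hence the off-diagonal entries of $N$ and $\mathcal{H}(G')$ coincide. On the diagonal, $\triangle_G(e)+2=\triangle_{G'}(e)+2+\kappa_{G'}(e)$. This triangle-closure argument is where inducedness is used, and it is the only point needing care.

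Next we bound the eigenvalues of $N$. Since $\kappa_{\min}(G')I\preceq K\preceq \kappa_{\max}(G')I$ in the Loewner order, the Courant–Fischer min-max characterization (equivalently, Weyl's inequality) gives, for $1\le i\le m'$,
\[
\lambda_i'+\kappa_{\min}(G')\le\theta_i(N)\le\lambda_i'+\kappa_{\max}(G').
\]

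Finally we apply Theorem~\ref{cau-inter} with $M=\mathcal{H}(G)$ of order $m$ and $N$ of order $m'$. This yields $\lambda_{m-m'+i}\le\theta_i(N)\le\lambda_i$ for $1\le i\le m'$. Chaining the upper interlacing bound with the lower Weyl bound gives $\lambda_i\ge\lambda_i'+\kappa_{\min}(G')$. Chaining the lower interlacing bound with the upper Weyl bound gives $\lambda_{m-m'+i}\le\lambda_i'+\kappa_{\max}(G')$, which is the second inequality of the statement, reading $\lambda_{i'}$ there as $\lambda'_i$.
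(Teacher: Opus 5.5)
Your proof is correct and follows the paper's route. Both arguments realize $\mathcal{H}(G')+\diag(\kappa_{G'}(e)\mid e\in E(G'))$ as the principal submatrix of $\mathcal{H}(G)$ indexed by $E(G')$ under the restricted orientation, then apply Cauchy interlacing and finish with the Courant--Weyl inequalities. Your triangle-closure check, that $e\vartriangle e'$ holds in $G$ if and only if it holds in $G'$ because $G'$ is induced, supplies the detail the paper leaves implicit when it asserts this principal-submatrix identity.
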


\begin{proof}
Keep the orientations of edges in $E(G')$ to coincide with those in $E(G)$. Accordingly, $\mathcal{H}'=\mathcal{H}(G')+\diag(\kappa_{G'}(e)\mid e\in E(G'))$ is a principal submatrix of $\mathcal{H}(G)$. By Theorem \ref{cau-inter}, we have $\lambda_{i}\ge \lambda_i(\mathcal{H}')\ge\lambda_{m-m'+i}$ for $1\le i\le m'$, and the result follows from the well-known Courant-Weyl Inequalities \cite[Theorem 2.8.1]{bro-hae-book}.
\end{proof}

As a simple application, we get the following result.
\begin{cor}
Let $G$ be a graph with $\mathcal{H}$-eigenvalues $\lambda_1\ge\lambda_2\ge\cdots\ge\lambda_m$. If $G$ contains a clique $K_t$, then $\lambda_{t\choose 2}\ge t$. In particular, if the clique number of $G$ is $\omega$, then $\lambda_{\omega \choose 2}\ge \omega$.
\end{cor}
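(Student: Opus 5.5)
The plan is to apply Theorem~\ref{thm-4-8} with $G'$ the subgraph of $G$ induced by the vertex set of the clique. Let $S\subseteq V(G)$ be the $t$ vertices of a copy of $K_t$ in $G$. Since these vertices are pairwise adjacent, $G'=G[S]\cong K_t$ and $m'=\binom{t}{2}$. By Example~\ref{ex-1}, every orientation gives $\mathcal{H}(K_t)=\diag(t,\ldots,t)$, so $\lambda_i'=t$ for all $1\le i\le \binom{t}{2}$. (This also follows directly from the definition: in $K_t$ any two adjacent edges lie in a common triangle, and $\triangle_{K_t}(e)=t-2$.)

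Next, note that $\kappa_{G'}(e)=\triangle_G(e)-\triangle_{G'}(e)\ge 0$ for every $e\in E(G')$, because every triangle of $G'$ is a triangle of $G$. Hence $\kappa_{\min}(G')\ge 0$. The first inequality of Theorem~\ref{thm-4-8} then gives, for $i=\binom{t}{2}=m'$,
\[
\lambda_{\binom{t}{2}}\ \ge\ \lambda'_{\binom{t}{2}}+\kappa_{\min}(G')\ \ge\ t .
\]

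The \emph{in particular} statement follows by taking $t=\omega$, since $G$ contains a clique $K_\omega$. The case $t=1$ is vacuous, because then $\binom{t}{2}=0$. For $t\ge 2$ we have $\binom{t}{2}\le m$, so the index is valid. No step is a real obstacle; the only point to check is that the induced subgraph on the clique is exactly $K_t$, so that its Helmholtzian is the scalar matrix $tI$.
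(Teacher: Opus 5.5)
Your proof is correct and takes the same route as the paper, which also obtains the corollary directly from Theorem~\ref{thm-4-8} applied to the induced clique together with Example~\ref{ex-1}. Your explicit checks that $\kappa_{\min}(G')\ge 0$ and that the index $\binom{t}{2}$ is valid for $t\ge 2$ fill in details the paper leaves implicit.
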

\begin{proof}
The result follows from Theorem \ref{thm-4-8} and Example \ref{ex-1}.
\end{proof}

We may single out a special case of the previous theorem.

\begin{cor}\label{cor-4-1}
If $G'$ is an induced subgraph of $G$ with $\kappa_{\min}(G')=\kappa_{\max}(G')=\kappa$, then, for $1\le i\le m'$,
\[\lambda_i(G)\ge\lambda_i(G')+\kappa\ge\lambda_{m-m'+i}(G),\]
where $m=|E(G)|$ and $m'=|E(G')|$.
\end{cor}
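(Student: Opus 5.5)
The corollary is the special case $\kappa_{\min}(G')=\kappa_{\max}(G')$ of Theorem~\ref{thm-4-8}. I would nevertheless give a direct argument, because the diagonal shift here is a scalar multiple of the identity, and the Courant--Weyl step then becomes an exact eigenvalue shift.

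First I orient $G$ arbitrarily and let $G'$ inherit the orientation on $E(G')\subseteq E(G)$. This is allowed because, by Theorem~\ref{lem-2}, the $\mathcal{H}$-spectrum does not depend on the orientation. Next I compare the principal submatrix $N$ of $\mathcal{H}(G)$ indexed by $E(G')$ with $\mathcal{H}(G')$.

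\emph{Off-diagonal entries.} For $e,e'\in E(G')$, the relations $e\leftrightarrow e'$ and $e\overset{\pm}{\sim}e'$ depend only on the orientations of $e$ and $e'$, so they are the same in $G$ and $G'$. Suppose $e\sim e'$ with common vertex $w$ and other endpoints $u,v$. Then $e\vartriangle e'$ holds exactly when $uv$ is an edge. Since $u,v\in V(G')$ and $G'$ is induced, $uv\in E(G)$ if and only if $uv\in E(G')$. Hence the off-diagonal entries of $N$ and $\mathcal{H}(G')$ agree. This is the one point where inducedness is essential, and I expect it to be the only real check in the proof.

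\emph{Diagonal entries.} Here $N_{ee}=\triangle_G(e)+2=\triangle_{G'}(e)+\kappa_{G'}(e)+2=\mathcal{H}(G')_{ee}+\kappa$, because $\kappa_{G'}(e)=\kappa$ for every $e\in E(G')$ by hypothesis.

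Together these give $N=\mathcal{H}(G')+\kappa I_{m'}$, so $\theta_i(N)=\lambda_i(G')+\kappa$ for every $i$. Applying Theorem~\ref{cau-inter} with $M=\mathcal{H}(G)$ of order $m$ and $N$ of order $m'$ yields
\[\lambda_i(G)\ge\theta_i(N)=\lambda_i(G')+\kappa\ge\lambda_{m-m'+i}(G)\qquad(1\le i\le m'),\]
which is exactly the claimed inequality. Alternatively, the result follows at once from Theorem~\ref{thm-4-8} by substituting $\kappa_{\min}=\kappa_{\max}=\kappa$.
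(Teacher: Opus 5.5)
Your argument is correct and is essentially the paper's: the paper obtains the corollary by specializing Theorem~\ref{thm-4-8}, whose proof likewise identifies the principal submatrix of $\mathcal{H}(G)$ indexed by $E(G')$ with $\mathcal{H}(G')+\diag(\kappa_{G'}(e)\mid e\in E(G'))$ and then applies Cauchy interlacing and Courant--Weyl. When $\kappa_{\min}=\kappa_{\max}$, that Courant--Weyl step collapses to the exact shift $\mathcal{H}(G')+\kappa I_{m'}$ that you use, and your explicit check that inducedness preserves the relation $e\vartriangle e'$ fills in a step the paper only asserts.
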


A {\it block} of $G$ is a maximal induced subgraph without cut vertices (i.e., vertices whose removal increases the number of connected components), and if $G'$ is formed by some blocks of $G$ then $\kappa_{\max}(G')=0$. This leads to the following result.

\begin{cor}
If $G'$ is an induced subgraph of $G$ formed by some blocks of $G$, then $\lambda_i(G)\ge \lambda_i(G')\ge\lambda_{m-m'+i}(G)$ for $1\le i\le m'$, where $m=|E(G)|$ and $m'=|E(G')|$.
\end{cor}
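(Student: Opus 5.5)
The plan is to apply Corollary~\ref{cor-4-1} with $\kappa=0$, so the whole task reduces to two facts about $G'$: that it is an induced subgraph of $G$, and that $\kappa_{G'}(e)=0$ for every $e\in E(G')$.

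Write $G'$ as the union of some blocks $B_1,\dots,B_r$ of $G$. Each block is an induced subgraph, and I will take the union of blocks to be induced as well. This is the reading implicit in the statement; if an edge of $G$ joined two vertices of $G'$ lying in different blocks of $G'$, that edge lies in a block not among the $B_j$, and I will take $G'$ to mean the induced subgraph in any case. Since $G'$ is induced, I keep the orientations of $G$ on $E(G')$, as in the proof of Theorem~\ref{thm-4-8}. Then
\[\mathcal{H}(G')+\diag(\kappa_{G'}(e)\mid e\in E(G'))\]
is a principal submatrix of $\mathcal{H}(G)$.

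The key step, and the only real content, is showing $\triangle_G(e)=\triangle_{G'}(e)$ for each $e\in E(G')$. Let $e=uv$ lie in a block $B_j$, and let $uvw$ be a triangle of $G$ containing $e$. A triangle is $2$-connected, so it lies entirely within a single block. Two distinct blocks share at most one vertex, so that block contains the edge $uv$. Every edge belongs to exactly one block, hence that block is $B_j$. Therefore $w\in V(B_j)\subseteq V(G')$, and the triangle lies in $G'$. The reverse inclusion holds trivially, so $\kappa_{G'}(e)=0$ and $\kappa_{\min}(G')=\kappa_{\max}(G')=0$.

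With this established, $\mathcal{H}(G')$ is itself a principal submatrix of $\mathcal{H}(G)$. Corollary~\ref{cor-4-1} with $\kappa=0$, or directly the Interlacing Theorem (Theorem~\ref{cau-inter}), then gives $\lambda_i(G)\ge\lambda_i(G')\ge\lambda_{m-m'+i}(G)$ for $1\le i\le m'$. The only obstacle is the block-membership argument for triangles, which is routine.
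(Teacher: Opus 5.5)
Your proof is correct and is essentially the paper's: the paper simply asserts $\kappa_{\max}(G')=0$ for a subgraph formed by blocks and applies Corollary~\ref{cor-4-1} with $\kappa=0$, and your argument that every triangle through $e$ lies in the unique block containing $e$ supplies the justification the paper omits. The one thing to drop is the aside about taking ``the induced subgraph in any case'': the hypothesis has to mean that the union of the $B_j$ is itself induced, because the subgraph induced on $V(B_1)\cup\cdots\cup V(B_r)$ can contain an edge of a block that is only partly included, which your key step does not cover and for which the conclusion actually fails (pendant edges $ab$, $cd$ attached to a triangle $bcx$ give $G'\cong P_4$ with $\kappa_{G'}(bc)=1$ and $\lambda_3(G')=2-\sqrt{2}<\tfrac{5-\sqrt{13}}{2}=\lambda_5(G)$).
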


For example, in Fig.~\ref{fig-1}, the subgraph $G'$ of $G$ induced by $\{2,3,4,5\}$ is a block. We have $\lambda_1'=\lambda_2'=\lambda_3'=4$ and $\lambda_4'=\lambda_5'=2$, and thus $\lambda_1\ge 4$, $\lambda_2=\lambda_3=4$, $4\ge\lambda_4\ge 2$, $\lambda_5=2$ and $2\ge\lambda_6$, where $\lambda_i$ and $\lambda_i'$ are $\mathcal{H}$-eigenvalues of $G$ and $G'$, respectively. In fact, $\lambda_1=5$, $\lambda_2=\lambda_3=4$, $\lambda_4=\lambda_5=2$ and $\lambda_6=1$.

Note that a longest path in $G$ must be an induced path. The following result follows.
\begin{cor}
Let $G$ be a connected graph with size $m$ and diameter $d$. If $P_{d+1}$ is a longest path in $G$, then $\lambda_{m-d+1}\le 2+2\cos{\pi/d}+\kappa_{\max}(P_{d+1})$. In particular, if there is a path $P_{d+1}$ in $G$ with $\kappa_{\max}(P_{d+1})=0$, then $\lambda_{m-d+1}\le 2+2\cos{\pi/d}$.
\end{cor}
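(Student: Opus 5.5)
The plan is to apply Theorem~\ref{thm-4-8} with $G'=P_{d+1}$, the given longest path, which has $m'=d$ edges. First I check that $P_{d+1}$ is an induced subgraph of $G$, as the preceding remark asserts. A chord between two path vertices would let us reroute: the chord together with the path produces a cycle through part of the path. Combined with connectivity and maximality, this is the standard way to rule out chords. Granting the remark, $P_{d+1}$ is induced, and Theorem~\ref{thm-4-8} applies with $m'=d$. Note also that $P_{d+1}$ has no triangles, so $\triangle_{P_{d+1}}(e)=0$ and $\kappa_{P_{d+1}}(e)=\triangle_G(e)$ for each edge $e$ of the path.

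Next I recall the $\mathcal{H}$-spectrum of a path from the earlier example. The eigenvalues of $\mathcal{H}(P_{d+1})$ are $2+2\cos\frac{j\pi}{d}$ for $1\le j\le d$, listed in decreasing order. The largest is therefore $\lambda_1'=2+2\cos\frac{\pi}{d}$.

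Now take $i=1$ in the second inequality of Theorem~\ref{thm-4-8}:
\[
\lambda_{m-m'+1}\le \lambda_1'+\kappa_{\max}(P_{d+1}).
\]
With $m'=d$ this reads $\lambda_{m-d+1}\le 2+2\cos\frac{\pi}{d}+\kappa_{\max}(P_{d+1})$, which is the first claim. The ``in particular'' part follows by substituting $\kappa_{\max}(P_{d+1})=0$. Here the path is any $P_{d+1}$ for which this quantity vanishes, i.e.\ every edge of the path lies in no triangle of $G$. For such a path to qualify it must again be induced; a diametral (shortest) path is automatically induced, so taking one of length $d$ justifies the application.

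The only delicate point is the inducedness of the chosen path. A path realizing the diameter is a shortest path between two vertices at distance $d$, and any chord would shorten it, so it is induced. The length bookkeeping is also consistent: such a path has $d+1$ vertices and $d$ edges, matching $P_{d+1}$ and the index $m-d+1$. I would therefore phrase the argument using a diametral shortest path, which is induced without any appeal to maximality; everything else is direct substitution.
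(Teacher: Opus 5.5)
Your route, Theorem~\ref{thm-4-8} with $G'=P_{d+1}$, $m'=d$ and $i=1$, is the one the paper has in mind. However, the step where you read off $\lambda_1'$ is wrong, and that step is what produces the claimed constant. By the paper's own example, $\mathcal{H}(P_n)=A(P_{n-1})+2I$ has eigenvalues $2+2\cos\frac{j\pi}{n}$ for $1\le j\le n-1$. With $n=d+1$, the eigenvalues of $\mathcal{H}(P_{d+1})=A(P_d)+2I$ are therefore $2+2\cos\frac{j\pi}{d+1}$ for $1\le j\le d$, so $\lambda_1'=2+2\cos\frac{\pi}{d+1}$. Your list is also internally inconsistent. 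At $j=d$ it gives the eigenvalue $2+2\cos\pi=0$, but $A(P_d)+2I$ is nonsingular, since the adjacency eigenvalues of $P_d$ lie strictly inside $(-2,2)$. What the argument actually proves is
\[
\lambda_{m-d+1}\le 2+2\cos\frac{\pi}{d+1}+\kappa_{\max}(P_{d+1}).
\]
This is strictly weaker than the claim, because $\cos\frac{\pi}{d+1}>\cos\frac{\pi}{d}$.

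This cannot be patched, because the bound with $\pi/d$ is false. Take $G=P_{d+1}$ itself. It is connected with diameter $d$ and $m=d$, and its only longest path is $G$, which is induced and triangle-free with $\kappa_{\max}=0$. Yet $\lambda_{m-d+1}=\lambda_1=2+2\cos\frac{\pi}{d+1}>2+2\cos\frac{\pi}{d}$. Concretely, $P_3$ has $\mathcal{H}=\begin{pmatrix}2&-1\\-1&2\end{pmatrix}$, so $\lambda_1=3>2=2+2\cos\frac{\pi}{2}$. The paper's statement has the same off-by-one error; the correct conclusion uses $\pi/(d+1)$.

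Separately, switching to a diametral geodesic is the right move, but you should not ``grant the remark'' first. A longest path need not be induced, even when it has exactly $d+1$ vertices. For example, attach a triangle $u_2xy$ to the middle vertex of $u_0u_1u_2u_3u_4$: then $d=4$, and $u_0u_1u_2xy$ is a longest path with chord $u_2y$. So $P_{d+1}$ has to be read as a shortest path realizing the diameter for Theorem~\ref{thm-4-8} to apply.
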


A \textit{twin} (resp.~\textit{co-twin}) of a vertex $v$ is  its non-neighbour (neighbour) sharing the same neighbourhood with~$v$. We consider particular induced subgraphs obtained by deleting twin or co-twin vertices.

\begin{prop}
	Let $v$ be a vertex of a graph $G$, and $e_1, e_2, \ldots, e_k$ be edges incident with $v$. Let $H$ be a graph obtained from $G$ by adding a twin (resp.~co-twin) to $v$. Then the $\mathcal{H}$-spectrum of $H$ contains $\triangle_G(e_1)+2, \triangle_G(e_2)+1, \ldots, \triangle_G(e_k)+1$ (resp.~$\triangle_G(e_1)+3, \triangle_G(e_2)+3, \ldots, \triangle_G(e_k)+3$).
\end{prop}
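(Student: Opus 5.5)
The plan is to exhibit explicit eigenvectors of $\mathcal{H}(H)$ supported on the edges at $v$ and at its new twin (or co-twin) $w$. By Theorem~\ref{lem-2} we may choose the orientation freely.

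\textbf{Setup.} Write $e_i=vu_i$ and $f_i=wu_i$ for $1\le i\le k$, where $N_G(v)=\{u_1,\dots,u_k\}$. In the co-twin case there is also the edge $g_0=vw$. Orient $u_i\to v$ and $u_i\to w$ for every $i$.

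\textbf{Step 1: local structure.} For any edge $g$ not incident with $v$ or $w$, the relations of $g$ to $e_i$ and to $f_i$ are identical: same type ($\overset{\pm}{\sim}$, $\leftrightarrow$ or none), and same triangle status, since $v$ and $w$ have the same neighbours. Hence $h_{g e_i}=h_{g f_i}$.

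\emph{Twin case.} Here $w\nsim v$, so $\triangle_H(e_i)=\triangle_H(f_i)=\triangle_G(e_i)$. The pair $e_i,f_i$ shares the tail $u_i$ and lies in no common triangle, so $h_{e_if_i}=1$. For $i\ne j$, the edges $e_j$ and $f_i$ are disjoint, so $h_{e_jf_i}=0$.

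\emph{Co-twin case.} Here $\triangle_H(e_i)=\triangle_H(f_i)=\triangle_G(e_i)+1$, because the triangle $vwu_i$ is new. Also $e_i\vartriangle f_i$, so $h_{e_if_i}=0$, and the edge $g_0$ is in a triangle with every $e_i$ and every $f_i$. For $i\ne j$, $e_j$ and $f_i$ share no vertex, so again $h_{e_jf_i}=0$.

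\textbf{Step 2: antisymmetric vectors.} Put $\mathbf{x}_i=\mathbf{1}_{e_i}-\mathbf{1}_{f_i}$. By Step 1, $(\mathcal{H}(H)\mathbf{x}_i)_g=0$ for every $g$ outside $\{e_j,f_j\}$, including $g_0$.

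On the edges at $v$ and $w$ we get the following.
\begin{itemize}
\item Twin case: $(\mathcal{H}\mathbf{x}_i)_{e_i}=\triangle_G(e_i)+2-1$.
\item Co-twin case: $(\mathcal{H}\mathbf{x}_i)_{e_i}=\triangle_G(e_i)+3$.
\item In both cases, $(\mathcal{H}\mathbf{x}_i)_{e_j}=h_{e_je_i}$ and $(\mathcal{H}\mathbf{x}_i)_{f_j}=-h_{f_jf_i}=-h_{e_je_i}$ for $j\neq i$.
\end{itemize}
So $W=\mathrm{span}\{\mathbf{x}_1,\dots,\mathbf{x}_k\}$ is $\mathcal{H}(H)$-invariant. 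Moreover $\mathcal{H}(H)|_W=\diag(\triangle_G(e_i)+c)+M$, where $c=1$ in the twin case, $c=3$ in the co-twin case, and $M_{ji}=h_{e_je_i}$.

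\textbf{Step 3: main obstacle, the cross terms $M$.} Since $e_i,e_j$ share $v$, we have $M_{ji}=0$ exactly when $u_iu_j\in E(G)$, in which case $e_i\vartriangle e_j$. Otherwise $M_{ji}=\pm1$.

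To reach the stated list, first treat the case where these cross terms vanish. Then each $\mathbf{x}_i$ is itself an eigenvector, with eigenvalue $\triangle_G(e_i)+1$ in the twin case and $\triangle_G(e_i)+3$ in the co-twin case.

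In general I would try to diagonalize the restriction on $W$ directly. Note that $M$ is, up to switching, the adjacency matrix of the complement of $G[N_G(v)]$, so its eigenvalues need not vanish. This makes the identification of the exceptional first value ($+2$ for $e_1$) in the twin statement the delicate point. I would check it against small examples such as the path $P_3$ with $v$ the centre, and read off from the computation which normalization of the listed values is intended.
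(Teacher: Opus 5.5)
Your Steps 1 and 2 are correct, but the obstacle in Step 3 is not a normalization issue. The cross terms $M_{ji}=h_{e_je_i}$ are genuinely nonzero whenever $v$ has two non-adjacent neighbours, and in that case the statement itself fails. So your plan cannot be completed into a proof of the proposition as written. The test you propose already shows this.

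Take $G=P_3$ with $v$ its centre, so $k=2$ and $\triangle_G(e_1)=\triangle_G(e_2)=0$.
\begin{itemize}
\item \emph{Twin case.} Here $H=C_4$. With your orientation $\mathcal{H}(C_4)=2I+A(C_4)$, whose eigenvalues are $4,2,2,0$. So $1$ is not an $\mathcal{H}$-eigenvalue. Your restriction to $W$ is $\left(\begin{smallmatrix}1&1\\1&1\end{smallmatrix}\right)$, with eigenvalues $2,0$.
\item \emph{Co-twin case.} Here $H$ is the diamond $K_4-e$. $\mathcal{H}(H)$ splits into the block $(4)$ for $vw$ and two blocks $\left(\begin{smallmatrix}3&1\\1&3\end{smallmatrix}\right)$. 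Its eigenvalues are $4,4,4,2,2$, so $3$ is missing.
\end{itemize}
The printed ``$+2$'' for $e_1$ is wrong even in the most favourable case: for $G=K_2$, adding a twin gives $H=P_3$, whose $\mathcal{H}$-eigenvalues are $3,1$.

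The paper's own proof uses exactly your $\mathbf{x}_i$. It asserts $\mathcal{H}(H)\mathbf{x}_i=(\triangle_G(e_i)+1)\mathbf{x}_i$, resp.\ $(\triangle_G(e_i)+3)\mathbf{x}_i$. This is your Step 2 with the entries $\pm h_{e_je_i}$ at $e_j$ and $f_j$ ($j\neq i$) silently dropped. It is therefore valid exactly when $M=O$, i.e.\ when $N_G(v)$ induces a clique. Under that hypothesis your argument is already a complete proof: the $\mathbf{x}_i$ are $k$ orthogonal eigenvectors. The twin-case value is then $+1$ for $e_1$ too, not $+2$.

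For general $G$ you can finish the diagonalization you intended. Let $N=G[N_G(v)]$ and use your orientation. Then $M=A(\overline{N})$ and $\triangle_G(e_i)=\deg_N(u_i)$, hence $\mathcal{H}(H)|_W=L(N)+J+(c-1)I$. Since $L(N)J=O$, this contributes the $\mathcal{H}$-eigenvalues $k+c-1$ and $\mu+c-1$, where $\mu$ runs over the Laplacian spectrum of $N$ with one $0$ removed. Comparing the trace of the square, these values carried by $W$ coincide with the listed ones only when $N$ is complete. This is the correct replacement for the proposition.
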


\begin{proof} Let $v'$ be a twin (resp.~co-twin) of $v$, and let $e_1', e_2', \ldots, e_k'$ be the corresponding edges. Without loss of generality, we may assume that the edge $e_i$ and its twin $e_i'$ are oriented in the same way, say the first to the vertex $v$, and the second to the vertex $v'$. Let $\mathbf{x}$ be the vector taking $1$ at $e_i$, $-1$ at its twin $e_1'$ and 0 at all remaining edges of $H$. We have $\mathcal{H}(H)\mathbf{x}=(\triangle_G(e_i)+1)\mathbf{x}$ if $v'$ is a twin, and $\mathcal{H}(H)\mathbf{x}=(\triangle_G(e_i)+3)\mathbf{x}$ if $v'$ is a co-twin. The latter follows since there is an additional triangle containing both $e_i$ and $e_i'$. The proof is completed.
\end{proof}

\section{Relationships between the $\mathcal{H}$-spectrum and the Laplacian spectrum}\label{H41}

We still assume that $G$ has $n$ vertices, $m$ edges and $t$ triangles. Recall that its  Helmholtzian matrix can be written as $\mathcal{H}(G)=\mathcal{B}\mathcal{B}^\intercal+\mathcal{C}^\intercal\mathcal{C}=\big(\mathcal{B}~~ \mathcal{C}^\intercal\big)\big(\mathcal{B}~~ \mathcal{C}^\intercal\big)^\intercal$, where $\mathcal{B}=(b_{ev})_{m\times n}$ and $\mathcal{C}=(c_{\vartriangle e})_{t\times m}$ are the edge-vertex and the triangle-edge incidence matrices, respectively. Let $\mathcal{H}'(G)=\big(\mathcal{B}~~ \mathcal{C}^\intercal\big)^\intercal \big(\mathcal{B} ~~\mathcal{C}^\intercal\big)$. Since $\mathcal{H}(G)$ and $\mathcal{H}'(G)$ share the same non-zero eigenvalues,  we investigate the eigenvalues of the latter matrix, which is given by
\[\mathcal{H}'(G)=\big(\mathcal{B}~~ \mathcal{C}^\intercal\big)^\intercal \big(\mathcal{B}~~ \mathcal{C}^\intercal\big)=\begin{pmatrix}\mathcal{B}^\intercal\mathcal{B}&\mathcal{B}^\intercal\mathcal{C}^\intercal\\ \mathcal{C}\mathcal{B}&\mathcal{C}\mathcal{C}^\intercal\end{pmatrix}.\]
Clearly,  $\mathcal{B}^\intercal\mathcal{B}$ is just the Laplacian matrix $L(G)$.

By a direct computation, for any vertex $v\in V(G)$ and any triangle $\vartriangle\in T(G)$, we obtain
\[(\mathcal{C}\mathcal{B})_{\vartriangle v}=\sum_{e\in E(G)}c_{\vartriangle e} b_{ev}.\]
If  $c_{\triangle e} b_{ev}\ne 0$, then $c_{\vartriangle e}\ne 0$ and $b_{ev}\ne 0$. This means that $v\in e$ and $e\in \vartriangle$, and thus $v\in \vartriangle$. Therefore, we have $c_{\vartriangle e} b_{ev}\ne 0$ only if $v\in \vartriangle$. In this case, there are two edges $e_1,e_2\in \vartriangle$ with $\{e_1^+,e_1^-\}\cap \{e_2^+,e_2^-\}=\{v\}$, and $(\mathcal{C}\mathcal{B})_{\vartriangle,v}=c_{\vartriangle e_1} b_{e_1v}+c_{\vartriangle e_2} b_{e_2v}$. If $e_1\rightarrow v\rightarrow e_2$ or $e_2\rightarrow v\rightarrow e_1$, by  definitions of $\mathcal{B}$ and $\mathcal{C}$, we have $b_{e_1v}+b_{e_2v}=0$ and $c_{\vartriangle e_1}=c_{\vartriangle e_2}$. This leads to $(\mathcal{C}\mathcal{B})_{\vartriangle v}=0$. If $v\rightarrow e_1,e_2$ or $e_1,e_2\rightarrow v$, then $b_{e_1 v}=b_{e_2v}$ and $c_{\vartriangle e_1}+c_{\vartriangle e_2}=0$, which again leads to $(\mathcal{C}\mathcal{B})_{\vartriangle v}=0$. Hence, $\mathcal{C}\mathcal{B}=O$.

It remains to consider the matrix $\mathcal{C}\mathcal{C}^\intercal$. For  $\vartriangle_1,\vartriangle_2\in T(G)$, we have
\[(\mathcal{C}\mathcal{C}^\intercal)_{\vartriangle_1\vartriangle_2}=\sum_{e\in E(G)}c_{\vartriangle_1 e}c_{\vartriangle_2 e}.\]
If $c_{\vartriangle_1 e}c_{\vartriangle_2 e}\ne 0$, then $e\in \vartriangle_1$ and $e\in \vartriangle_2$. If $\vartriangle_1=\vartriangle_2$, then there are three edges $e_1,e_2,e_3\in\vartriangle_1$, and thus $(\mathcal{C}\mathcal{C}^\intercal)_{\vartriangle_1\vartriangle_2}=3$. If $\vartriangle_1\ne\vartriangle_2$, then $c_{\vartriangle_1 e}c_{\vartriangle_2 e}\ne 0$ holds only if there is an edge $e$ belonging to both $\vartriangle_1$ and $\vartriangle_2$. In this case, we have $(\mathcal{C}\mathcal{C}^\intercal)_{\vartriangle_1\vartriangle_2}=c_{\vartriangle_1 e}c_{\vartriangle_2 e}$. If $e\in \vartriangle_i^+$ for $i\in\{1,2\}$, or $e\in \vartriangle_i^-$ for $i\in\{1,2\}$, then $c_{\vartriangle_1 e}=c_{\vartriangle_2 e}$, and thus $(\mathcal{C}\mathcal{C}^\intercal)_{\vartriangle_1\vartriangle_2}=1$; otherwise, $(\mathcal{C}\mathcal{C}^\intercal)_{\vartriangle_1\vartriangle_2}=-1$.

To create a more clear picture about the matrix $\mathcal{C}\mathcal{C}^\intercal$, we introduce a {\it triangular signed graph} $G_{\vartriangle}$ of $G$. The vertex set $V(G_{\vartriangle})$ coincides with $T(G)$, and two triangles $\vartriangle_1$ and $\vartriangle_2$ are joined by a positive edge  if they share a common edge $e$ with either $e\in \vartriangle_i^+$ for $i\in\{1,2\}$, or $e\in \vartriangle_i^-$ for $i\in\{1,2\}$. They are joined by a negative edge if they share a common edge $e$ with either $e\in\vartriangle_1^+$ and $e\in\vartriangle_2^-$ or $e\in\vartriangle_1^-$ and $e\in\vartriangle_2^+$. Denote by $A(G_{\vartriangle})$ the adjacency matrix of $G_{\vartriangle}$. It is clear that $G_{\vartriangle}$  depends on the orientation of  triangles, but not on the orientation of  edges.

The previous discussion leads to
\begin{equation}\label{eq:H'}\mathcal{H}'(G)=\begin{pmatrix}L(G)&0\\ 0&3I+A(G_{\vartriangle})\end{pmatrix}.\end{equation}
Thus, the following result follows.

\begin{thm}\label{4-9-thm1}
	Let $G$ be a graph with Laplacian eigenvalues $\mu_1\ge\mu_2\ge\cdots\ge\mu_n$, and let $\eta_1\ge \eta_2\ge\cdots\ge\eta_t$ be the eigenvalues of $A(G_{\vartriangle})$ of \eqref{eq:H'}. Then the non-zero eigenvalues of $\mathcal{H}(G)$ are all $\mu_i$ with $\mu_i\ne 0$ and all $3+\eta_j$ with $\eta_j\ne -3$. In particular, the largest $\mathcal{H}$-eigenvalue is $\max\{\mu_1,3+\eta_1\}$.
\end{thm}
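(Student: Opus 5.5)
The plan is to use the factorization $\mathcal{H}(G)=MM^\intercal$ with $M=\big(\mathcal{B}~~\mathcal{C}^\intercal\big)$, an $m\times(n+t)$ real matrix, together with the block form \eqref{eq:H'} of $\mathcal{H}'(G)=M^\intercal M$ derived just above. The whole argument rests on a standard fact: for any real matrix $M$, the matrices $MM^\intercal$ and $M^\intercal M$ have the same non-zero eigenvalues, counted with multiplicity. I will recall briefly why. If $M^\intercal M\mathbf{y}=\lambda\mathbf{y}$ with $\lambda\neq0$, then $MM^\intercal(M\mathbf{y})=\lambda M\mathbf{y}$ and $M\mathbf{y}\ne\mathbf{0}$. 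The map $\mathbf{y}\mapsto M\mathbf{y}$ is injective on the $\lambda$-eigenspace, so multiplicities transfer in one direction; by symmetry they transfer in the other as well. Alternatively, one can use the singular value decomposition of $M$.

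Next I read off the spectrum of $\mathcal{H}'(G)$ from \eqref{eq:H'}. It is block diagonal, with blocks $L(G)=\mathcal{B}^\intercal\mathcal{B}$ and $\mathcal{C}\mathcal{C}^\intercal=3I+A(G_{\vartriangle})$. These identities were established in the computation preceding the statement:
\begin{itemize}
\item $\mathcal{C}\mathcal{B}=O$;
\item the diagonal entries of $\mathcal{C}\mathcal{C}^\intercal$ equal $3$;
\item the off-diagonal entries of $\mathcal{C}\mathcal{C}^\intercal$ equal $\pm1$ according to the signs of $G_{\vartriangle}$.
\end{itemize}
Hence the spectrum of $\mathcal{H}'(G)$ is the multiset union of $\{\mu_1,\dots,\mu_n\}$ and $\{3+\eta_1,\dots,3+\eta_t\}$. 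Discarding the zero values, that is, $\mu_i=0$ and $\eta_j=-3$, gives exactly the non-zero eigenvalues of $\mathcal{H}(G)$, with multiplicities.

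One detail needs care. Two triangles share at most one edge, because two distinct triangles sharing two edges would coincide. Therefore the entry $(\mathcal{C}\mathcal{C}^\intercal)_{\vartriangle_1\vartriangle_2}$ is a single product $c_{\vartriangle_1e}c_{\vartriangle_2e}$, which justifies that $G_{\vartriangle}$ is a simple signed graph. This was implicitly used above, and I would state it explicitly.

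For the largest eigenvalue I argue as follows. Since $G$ has at least one edge, $\mathcal{H}(G)\neq O$ (its diagonal entries are $\triangle(e)+2>0$), so $\lambda_1(G)>0$ and it is a non-zero eigenvalue. By the above, $\lambda_1(G)$ is the largest element of the union, which equals $\max\{\mu_1,3+\eta_1\}$. Both candidates are non-negative because $L(G)$ and $\mathcal{C}\mathcal{C}^\intercal$ are positive semi-definite, and $\mu_1>0$ since $G$ has an edge. So the maximum is attained by a genuinely non-zero eigenvalue, and there is no issue with the exclusions. If $t=0$, the second block is empty and the formula reads $\lambda_1=\mu_1$, consistent with Remark~\ref{relation-HL}.

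I expect no real obstacle here. The only step deserving attention is the multiplicity-preserving transfer between $MM^\intercal$ and $M^\intercal M$, and I would handle it by the eigenspace-injectivity argument above.
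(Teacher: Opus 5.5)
Your proposal is correct and follows the paper's route. Like the paper, you pass from $\mathcal{H}(G)=MM^\intercal$ to $\mathcal{H}'(G)=M^\intercal M$, use $\mathcal{C}\mathcal{B}=O$ and $\mathcal{C}\mathcal{C}^\intercal=3I+A(G_{\vartriangle})$ to obtain the block-diagonal form \eqref{eq:H'}, and read off the non-zero spectrum; your additions (the multiplicity-preserving transfer, the fact that two distinct triangles share at most one edge, and the positivity check behind $\lambda_1=\max\{\mu_1,3+\eta_1\}$) supply details the paper leaves implicit.
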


From Theorem \ref{4-9-thm1}, to investigate the  $\mathcal{H}$-spectrum of $G$, it suffices to investigate the spectrum of $L(G)$ and the spectrum of $A(G_{\vartriangle})$. Since $L(G)$ is well studied, in what follows we focus our attention to $G_{\vartriangle}$. In fact, the triangular signed graph $G_{\vartriangle}$ reveals  relations between triangles of $G$. From Theorem~\ref{lem-2}, we get the following simple result.

\begin{cor}
	The spectrum of $G_{\vartriangle}$ does not depend on the orientation of $G$.
\end{cor}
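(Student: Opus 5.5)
The plan is to read the spectrum of $A(G_{\vartriangle})$ off quantities already known to be orientation-free, using the block decomposition \eqref{eq:H'} and Theorem~\ref{4-9-thm1}. First, note which orientations matter. As remarked before Theorem~\ref{4-9-thm1}, $G_{\vartriangle}$ depends only on the orientations of the triangles, not on those of the edges. On the other hand, $\mathcal{H}(G)$ does not depend on the triangle orientations at all. By Theorem~\ref{lem-2}, its spectrum does not depend on the edge orientations either. Hence $\Sp_{\mathcal{H}}(G)$ is an invariant of the underlying graph. The Laplacian spectrum $\mu_1\ge\cdots\ge\mu_n$ and the number $t$ of triangles are invariants as well.

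Next, apply Theorem~\ref{4-9-thm1}. The multiset of non-zero $\mathcal{H}$-eigenvalues is the union of the non-zero $\mu_i$ and the numbers $3+\eta_j$ with $\eta_j\neq-3$. Removing the (orientation-independent) non-zero Laplacian eigenvalues leaves the multiset $\{3+\eta_j:\eta_j\ne -3\}$, so it is determined by $G$ alone. Subtracting $3$ from each element recovers every $\eta_j\neq -3$ with its multiplicity. The multiplicity of $-3$ is then $t$ minus the size of that multiset, which is again determined. Thus the full spectrum $\eta_1\ge\cdots\ge\eta_t$ is independent of the orientation.

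As a cross-check, and as a more self-contained alternative, I would also give a direct switching argument. Reversing the orientation of one triangle $\vartriangle$ multiplies the row of $\mathcal{C}$ indexed by $\vartriangle$ by $-1$, i.e.\ $\mathcal{C}$ becomes $S\mathcal{C}$ with $S$ the switching matrix having $-1$ at $\vartriangle$. Then $\mathcal{C}\mathcal{C}^\intercal$ becomes $S\mathcal{C}\mathcal{C}^\intercal S$, so $3I+A(G_{\vartriangle})$, and hence $A(G_{\vartriangle})$, changes by a switching similarity. Edge orientations do not affect $G_{\vartriangle}$ at all. Any two orientations are connected by a sequence of such single reversals, which gives the claim.

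There is no real obstacle here. The only delicate point in the first route is the bookkeeping of the eigenvalue $-3$, since Theorem~\ref{4-9-thm1} only controls non-zero $\mathcal{H}$-eigenvalues; it is handled by the count via $t$ above.
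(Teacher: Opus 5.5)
Your proposal is correct, and your first route is the paper's argument. The paper derives the corollary from Theorem~\ref{lem-2} together with the decomposition \eqref{eq:H'} and Theorem~\ref{4-9-thm1}, and your recovery of the multiplicity of $-3$ from $t$ supplies a detail the paper leaves implicit. Your direct switching argument ($\mathcal{C}\mapsto S\mathcal{C}$ when one triangle is reversed) is also valid and arguably the more natural proof, since $G_{\vartriangle}$ depends only on triangle orientations while Theorem~\ref{lem-2} concerns edge orientations.
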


For a triangle $\vartriangle\in T(G)$, let $e_1,e_2,e_3$ be its edges. Therefore, the degree of $\vartriangle$ in $G_{\vartriangle}$ is $d(\vartriangle)=\triangle(e_1)+\triangle(e_2)+\triangle(e_3)-3\le 3\max\triangle(e)-3$. This leads to the following result.

\begin{cor}\label{cor:l1m1}
	For the largest $\mathcal{H}$-eigenvalue  $\lambda_1$ of $G$, $\lambda_1\le\max\{\mu_1,3\max\triangle(e)\}$.
\end{cor}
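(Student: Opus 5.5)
By Theorem~\ref{4-9-thm1}, $\lambda_1=\max\{\mu_1,3+\eta_1\}$, where $\eta_1$ is the largest eigenvalue of $A(G_{\vartriangle})$. So the whole task is to show
\[3+\eta_1\le 3\max_{e\in E(G)}\triangle(e).\]
If $G$ has no triangles, $G_{\vartriangle}$ is empty and only $\mu_1$ appears, so the bound is immediate. We may therefore assume $t\ge 1$.

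The plan is to bound $\eta_1$ by the maximum degree of the underlying graph of $G_{\vartriangle}$. The matrix $A(G_{\vartriangle})$ is symmetric with entries in $\{0,\pm1\}$ and zero diagonal. Hence its spectral radius is at most its maximum absolute row sum (Gershgorin, or $\rho(A)\le\|A\|_\infty$). The absolute row sum at $\vartriangle$ is exactly the number of triangles sharing an edge with $\vartriangle$, which is its degree $d(\vartriangle)$ in $G_{\vartriangle}$. This gives
\[\eta_1\le\max_{\vartriangle} d(\vartriangle).\]

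Next I use the degree count stated just before the corollary. Let $\vartriangle$ have edges $e_1,e_2,e_3$. Each other triangle adjacent to $\vartriangle$ shares exactly one edge with it, since two distinct triangles cannot share two edges: two edges of a triangle determine all three of its vertices. So the neighbours of $\vartriangle$ are counted without overlap by $\sum_i(\triangle(e_i)-1)$, giving
\[d(\vartriangle)=\triangle(e_1)+\triangle(e_2)+\triangle(e_3)-3\le 3\max_{e}\triangle(e)-3.\]
Then $3+\eta_1\le 3\max_e\triangle(e)$, and combining this with the $\mu_1$ term finishes the proof.

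I expect no real obstacle. The one point to verify is that the signs in $A(G_{\vartriangle})$ do not spoil the degree bound. They don't, because the row-sum bound uses absolute values. Also, there are no multi-edges in $G_{\vartriangle}$, since distinct triangles share at most one edge, so the off-diagonal entries really are $0,\pm1$.
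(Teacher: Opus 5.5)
Your proposal is correct and follows essentially the same route as the paper. It combines $\lambda_1=\max\{\mu_1,3+\eta_1\}$ from Theorem~\ref{4-9-thm1} with the degree count $d(\vartriangle)=\triangle(e_1)+\triangle(e_2)+\triangle(e_3)-3$ in $G_{\vartriangle}$, and it spells out two steps the paper leaves implicit: that $\eta_1$ is at most the maximum degree of the signed graph $G_{\vartriangle}$ (via absolute row sums), and that distinct triangles share at most one edge, so the neighbour count has no overlap.
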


Note that $\mu_1\ge\Delta(G)+1$. If for the number of triangles $t(G)\le \lfloor (\Delta(G)+1)/3\rfloor$, then $\mu_1\ge3\max\triangle(e)$, and thus $\lambda_1=\mu_1$.

\begin{cor}
	If $t(G)\le \lfloor (\Delta(G)+1)/3\rfloor$, then $\lambda_1=\mu_1$.
\end{cor}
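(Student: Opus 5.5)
The plan is to combine Theorem~\ref{4-9-thm1} with Corollary~\ref{cor:l1m1} and the classical lower bound $\mu_1\ge\Delta(G)+1$. The idea is to sandwich $\lambda_1$ between $\mu_1$ from below and $\mu_1$ from above.

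\emph{Lower bound.} By Theorem~\ref{4-9-thm1}, every non-zero Laplacian eigenvalue of $G$ is an $\mathcal{H}$-eigenvalue. Since $G$ has at least one edge, $\mu_1>0$, and hence $\lambda_1\ge\mu_1$. (Equivalently, $\lambda_1=\max\{\mu_1,3+\eta_1\}\ge\mu_1$.)

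\emph{Upper bound.} Corollary~\ref{cor:l1m1} gives $\lambda_1\le\max\{\mu_1,3\max_e\triangle(e)\}$, so it suffices to show $3\max_e\triangle(e)\le\mu_1$. The triangle degree of any edge is at most the total number of triangles, so $\max_e\triangle(e)\le t(G)$. The hypothesis then yields
\[3\max_e\triangle(e)\le 3t(G)\le 3\lfloor(\Delta(G)+1)/3\rfloor\le\Delta(G)+1.\]
Next I would invoke the known inequality $\mu_1\ge\Delta(G)+1$ for a graph with at least one edge (Grone--Merris), which the paper has already noted. This gives $3\max_e\triangle(e)\le\mu_1$, hence $\lambda_1\le\mu_1$. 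The degenerate case $t(G)=0$ is covered automatically: then $\max_e\triangle(e)=0$, and in any case Remark~\ref{relation-HL} gives $\mathcal{H}(G)=\mathcal{B}\mathcal{B}^\intercal$ directly.

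The only step needing care is the bound $\mu_1\ge\Delta+1$, which I would cite rather than reprove. It follows from the Rayleigh quotient of $L(G)$ evaluated at the vector $\Delta\,\mathbf{e}_v-\sum_{u\sim v}\mathbf{e}_u$, where $v$ is a vertex of maximum degree. Everything else is a direct chaining of the inequalities above.
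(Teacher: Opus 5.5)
Your proof is correct and follows the paper's argument: $\mu_1\ge\Delta(G)+1$, together with $\max_e\triangle(e)\le t(G)$, makes the second term in Corollary~\ref{cor:l1m1} at most $\mu_1$, so $\lambda_1\le\mu_1$, and Theorem~\ref{4-9-thm1} gives $\lambda_1\ge\mu_1$. You simply spell out the lower bound $\lambda_1\ge\mu_1$ and a Rayleigh-quotient justification of $\mu_1\ge\Delta(G)+1$, both of which the paper leaves implicit.
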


By inspecting graphs with at most six vertices, we did not find any with $\lambda_1\neq \mu_1$. Accordingly, we pose the following problem.

\begin{problem}
	Prove or disprove that $\lambda_1=\mu_1$ holds for every graph, where $\lambda_1$ and $\mu_1$ are its largest $\mathcal{H}$-eigenvalue and its largest Laplacian eigenvalue, respectively.
\end{problem}

To close this part, we present a relation between $\mathcal{H}$-eigenvalues and the cliques of $G$. We need the Caro-Wei's Theorem.

\begin{lem}[Caro \cite{Caro}; Wei \cite{Wei}]\label{4-9-lem2}
	For a graph $G$  with independence number $\alpha(G)$, $\alpha(G)\ge\sum_{v\in V(G)}\frac{1}{d(v)+1}$.
\end{lem}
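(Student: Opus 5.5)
The statement is the Caro--Wei bound $\alpha(G)\ge\sum_{v}\frac{1}{d(v)+1}$. I would prove it with the classical random-ordering (probabilistic) argument, which needs nothing from the earlier sections.

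First, choose a uniformly random linear ordering $\pi$ of $V(G)$. Define
\[I_\pi=\{v\in V(G)\mid v \text{ precedes every neighbour of } v \text{ in } \pi\}.\]
The set $I_\pi$ is independent. Indeed, if $u,v\in I_\pi$ were adjacent, then $u$ would have to precede $v$ and $v$ would have to precede $u$, which is impossible. Consequently $\alpha(G)\ge |I_\pi|$ for every ordering $\pi$.

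Next, compute the expected size of $I_\pi$. Fix a vertex $v$ and look at the set $\{v\}\cup N_G(v)$, which has $d(v)+1$ elements. Under a uniform random ordering, each of these elements is equally likely to come first among them. So $v\in I_\pi$ with probability exactly $\frac{1}{d(v)+1}$. By linearity of expectation,
\[\mathbb{E}|I_\pi|=\sum_{v\in V(G)}\frac{1}{d(v)+1}.\]
Some ordering attains at least the expected value, so there is a $\pi$ with $|I_\pi|\ge\sum_v\frac{1}{d(v)+1}$. Combined with $\alpha(G)\ge|I_\pi|$, this gives the claim.

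There is no real obstacle here. The only point needing care is the symmetry step: the relative order of the $d(v)+1$ vertices in $\{v\}\cup N_G(v)$ is uniform, so each is first with equal probability. If a deterministic proof is preferred, I would use the greedy algorithm with induction on $n$ instead. At each step, take a vertex $v$ of minimum degree, put it in the independent set, and delete $\{v\}\cup N_G(v)$. Each deleted vertex $u$ has $d(u)\ge d(v)$, so the deleted vertices contribute at most $\sum_{u\in N[v]}\frac{1}{d(u)+1}\le 1$ to the sum. Also, deleting vertices only lowers the degrees of the remaining vertices, so their terms in the sum can only grow. The induction hypothesis applied to the remaining graph then closes the argument.
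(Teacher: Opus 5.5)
Your proof is correct, and both routes you give are complete. The paper itself does not prove this lemma: it quotes the Caro--Wei bound from the literature and applies it directly only once, in Lemma~\ref{4-9-lem2a}. So your argument supplies a self-contained proof where the paper relies on a citation, rather than competing with an internal proof.

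In the random-ordering version you justify every step that matters:
\begin{itemize}
\item $I_\pi$ is independent;
\item the order induced on $\{v\}\cup N_G(v)$ is uniform;
\item some ordering attains at least the mean.
\end{itemize}
The greedy induction also contains all three facts it needs:
\begin{itemize}
\item $\alpha(G)\ge 1+\alpha\bigl(G-(\{v\}\cup N_G(v))\bigr)$, because $v$ has no neighbour in the remaining graph;
\item the closed neighbourhood of a minimum-degree vertex contributes at most $1$ to the sum;
\item deleting vertices can only enlarge the remaining terms.
\end{itemize}

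The two routes buy different things. The probabilistic proof is the shortest. The greedy proof is constructive: it shows that the minimum-degree greedy algorithm actually attains the bound.

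The paper in fact only needs the regular case. It applies the lemma solely to $(K_n)_{\vartriangle}$, which is $3(n-3)$-regular. There the bound reads $\alpha\ge |V|/(\Delta+1)$, and this already follows from the crudest greedy argument, since each chosen vertex removes at most $\Delta+1$ vertices.
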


Note that, if $G\cong K_n$, then every vertex $\vartriangle\in V(G_{\vartriangle})$ has degree $3(n-3)$ in $G_{\vartriangle}$. The following result follows immediately from Lemma \ref{4-9-lem2}.

\begin{lem}\label{4-9-lem2a}
	If $G\cong K_n$, then $\alpha(G_{\vartriangle})\ge\frac{{n\choose 3}}{1+3(n-3)}$.
\end{lem}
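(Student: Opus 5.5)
The plan is to apply Lemma~\ref{4-9-lem2} (Caro--Wei) to the underlying unsigned graph of $G_{\vartriangle}$. This is legitimate because the independence number involves only adjacency, not edge signs. So the whole task reduces to showing that, for $G\cong K_n$, $G_{\vartriangle}$ is $3(n-3)$-regular with exactly ${n\choose 3}$ vertices.

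\textbf{Step 1: counting vertices.} The vertices of $G_{\vartriangle}$ are the triangles of $K_n$, and every $3$-subset of $V(K_n)$ spans a triangle. Hence $|V(G_{\vartriangle})|={n\choose 3}$.

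\textbf{Step 2: computing degrees.} Fix a triangle $\vartriangle$ with edges $e_1,e_2,e_3$. Two distinct triangles are adjacent in $G_{\vartriangle}$ exactly when they share an edge. Moreover, two distinct triangles share at most one edge, since two common edges would force three common vertices. Each edge of $K_n$ lies in exactly $n-2$ triangles, so $\triangle(e_i)=n-2$. Therefore the degree formula stated before Corollary~\ref{cor:l1m1} gives
\[d(\vartriangle)=\triangle(e_1)+\triangle(e_2)+\triangle(e_3)-3=3(n-2)-3=3(n-3).\]
The injectivity remark (distinct triangles share at most one edge) ensures that no neighbour is counted twice. This matches the note preceding the lemma.

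\textbf{Step 3: applying Caro--Wei.} Lemma~\ref{4-9-lem2} now gives
\[\alpha(G_{\vartriangle})\ \ge \sum_{\vartriangle\in T(K_n)}\frac{1}{3(n-3)+1}=\frac{{n\choose 3}}{1+3(n-3)}.\]

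There is no real obstacle here. The only point needing care is Step 2: checking that neighbours are counted without multiplicity, and that the signs on the edges of $G_{\vartriangle}$ are irrelevant to its independence number.
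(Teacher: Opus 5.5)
Your proposal is correct and follows essentially the same route as the paper: show that $G_{\vartriangle}$ is $3(n-3)$-regular on $\binom{n}{3}$ vertices, then apply the Caro--Wei bound (Lemma~\ref{4-9-lem2}). Your added checks are correct and fill in details the paper leaves implicit: distinct triangles share at most one edge, so no neighbour is double-counted, and edge signs do not affect the independence number.
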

\begin{proof}
For each vertex $\vartriangle\in G_{\vartriangle}$, its degree in $G_{\vartriangle}$ satisfies $d(\vartriangle)=3(n-3)$. Thus, Lemma \ref{4-9-lem2} implies that $\alpha(G_{\vartriangle})\ge \sum_{\vartriangle \in V(G_{\vartriangle})}\frac{1}{3(n-3)+1}=\frac{\binom{n}{3}}{3(n-3)+1}$.
\end{proof}

If $K_a$ and $K_b$ are edge-disjoint cliques in $G$, then the triangles in $K_a$ and those in $K_b$ do not share any edge. Therefore, the corresponding vertices in $G_{\vartriangle}$ are not adjacent. From Lemma \ref{4-9-lem2a}, we obtain the following one.

\begin{lem}\label{4-9-lem3}
	If $K_{s_1}, K_{s_2},\ldots,K_{s_t}$ are edge-disjoint cliques of $G$ with $s_i\ge 3$, then the independence number of $G_{\vartriangle}$ satisfies
	\[\alpha(G_{\vartriangle})\ge \sum_{i=1}^t\frac{{s_i\choose 3}}{1+3(s_i-3)}.\]
\end{lem}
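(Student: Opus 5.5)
The plan is to show that, as vertex sets of $G_{\vartriangle}$, the triangle sets of the cliques are pairwise non-adjacent, and then to add up the independent sets found inside each clique. For each $i$, write $T_i\subseteq T(G)$ for the set of triangles of $G$ contained in the clique $K_{s_i}$, so that $|T_i|={s_i\choose 3}$.

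\textbf{Step 1: the sets $T_i$ are pairwise non-adjacent.} Two triangles are adjacent in $G_{\vartriangle}$ exactly when they share an edge of $G$. Take a triangle in $T_i$ and one in $T_j$ with $i\neq j$. Every edge of the first lies in $K_{s_i}$, and every edge of the second lies in $K_{s_j}$. These cliques are edge-disjoint, so the two triangles share no edge and are not adjacent. In particular the $T_i$ are pairwise disjoint, since a common triangle would give a common edge.

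\textbf{Step 2: apply Lemma~\ref{4-9-lem2a} inside each clique.} Consider the induced subgraph $G_{\vartriangle}[T_i]$. Two triangles of $K_{s_i}$ are adjacent in $G_{\vartriangle}$ if and only if they share an edge, and that edge is an edge of $K_{s_i}$. So the underlying graph of $G_{\vartriangle}[T_i]$ is exactly the triangular graph $(K_{s_i})_{\vartriangle}$; the signs play no role for independence. Lemma~\ref{4-9-lem2a} then gives an independent set $I_i\subseteq T_i$ with
\[
|I_i|\ge \frac{{s_i\choose 3}}{1+3(s_i-3)}.
\]

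\textbf{Step 3: combine.} By Step 1 there are no edges between $I_i$ and $I_j$ for $i\ne j$, and the sets are disjoint. Hence $\bigcup_i I_i$ is independent in $G_{\vartriangle}$, and summing the bounds from Step 2 gives the stated inequality.

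\textbf{Main obstacle.} The only delicate point is the identification in Step 2. Lemma~\ref{4-9-lem2a} speaks of the triangular graph of $K_n$ as a graph on its own, whereas here $K_{s_i}$ sits inside $G$. The triangles of $G$ lying within $K_{s_i}$ are exactly the triangles of $K_{s_i}$, and adjacency only depends on sharing an edge. Therefore the induced subgraph of $G_{\vartriangle}$ on $T_i$ coincides with $(K_{s_i})_{\vartriangle}$, and the lemma transfers directly.
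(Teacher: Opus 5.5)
Your proof is correct and matches the paper's argument. The paper also observes that edge-disjointness makes the subgraphs $G^{(i)}_{\vartriangle}$ vertex-disjoint with no edges between them, so $\alpha(G_{\vartriangle})\ge\sum_i\alpha(G^{(i)}_{\vartriangle})$, and then applies Lemma~\ref{4-9-lem2a} to each clique; your Step 2 just makes explicit the identification of the induced subgraph on $T_i$ with $(K_{s_i})_{\vartriangle}$, which the paper leaves tacit.
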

\begin{proof}
Assume $G^{(i)}=K_{s_i}$ for $1\le i\le t$. Since $G^{(1)},\ldots, G^{(t)}$ are edge-disjoint, the graphs $G^{(i)}_{\vartriangle}$ are vertex-disjoint induced subgraphs and there is no edge between $G^{(i)}_{\vartriangle}$ and $G^{(j)}_{\vartriangle}$ for $i\ne j$. Therefore, from Lemma \ref{4-9-lem2a}, we get
\[\alpha(G_{\vartriangle})\ge \sum_{i=1}^t\alpha(G^{(i)}_{\vartriangle})\ge\sum_{i=1}^t\frac{\binom{s_i}{3}}{1+3(s_i-3)}.\]
\end{proof}

Combining Theorem \ref{cau-inter} and Lemma \ref{4-9-lem3}, we get the following result.

\begin{thm}\label{4-9-thm2}
	Let $K_{s_1}, K_{s_2},\ldots,K_{s_t}$ be edge-disjoint cliques of a graph $G$ with $s_i\ge 3$. Then $G$ has at least $\sum_{i=1}^t\frac{{s_i\choose 3}}{1+3(s_i-3)}$ $\mathcal{H}$-eigenvalues not less than $3$.
\end{thm}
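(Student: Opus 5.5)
By Theorem~\ref{4-9-thm1} and the block form \eqref{eq:H'}, every eigenvalue $3+\eta_j$ of $3I+A(G_{\vartriangle})$ with $\eta_j\neq -3$ is an $\mathcal{H}$-eigenvalue of $G$. It is therefore enough to show that $A(G_{\vartriangle})$ has at least $\alpha(G_{\vartriangle})$ non-negative eigenvalues. Each such eigenvalue $\eta_j\ge 0$ is automatically different from $-3$, so it yields an $\mathcal{H}$-eigenvalue $3+\eta_j\ge 3$. Combining this with the lower bound on $\alpha(G_{\vartriangle})$ from Lemma~\ref{4-9-lem3} gives the theorem.

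To count the non-negative eigenvalues, let $S\subseteq T(G)$ be a maximum independent set of $G_{\vartriangle}$, so $|S|=\alpha(G_{\vartriangle})=:s$. No two triangles in $S$ share an edge of $G$, hence all entries of $A(G_{\vartriangle})$ indexed by $S\times S$ vanish. The principal submatrix of $3I+A(G_{\vartriangle})$ on $S$ is therefore $3I_s$. Applying the Interlacing Theorem (Theorem~\ref{cau-inter}) with $M=3I+A(G_{\vartriangle})$ of order $t$ and $N=3I_s$ gives $\theta_i(M)\ge\theta_i(N)=3$ for $1\le i\le s$. So $3I+A(G_{\vartriangle})$ has at least $s$ eigenvalues that are at least $3$; in particular they are non-zero.

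Because the zero block of $\mathcal{H}'(G)$ is irrelevant here, these $s$ eigenvalues are non-zero eigenvalues of $\mathcal{H}'(G)$. Since $\mathcal{H}(G)$ and $\mathcal{H}'(G)$ share their non-zero eigenvalues with multiplicities, they are also $\mathcal{H}$-eigenvalues of $G$. Hence $G$ has at least $\alpha(G_{\vartriangle})\ge\sum_{i=1}^t\binom{s_i}{3}/(1+3(s_i-3))$ $\mathcal{H}$-eigenvalues not less than $3$, where the last inequality is Lemma~\ref{4-9-lem3}. Since the count is an integer, we may even take the ceiling of the sum.

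The only delicate point is multiplicity bookkeeping. The $s$ eigenvalues obtained from interlacing must be counted as $s$ eigenvalues of $\mathcal{H}(G)$, including repeats, and not merely as values in its spectrum. This holds because $XX^\intercal$ and $X^\intercal X$ share their non-zero eigenvalues with multiplicities, and because the block-diagonal decomposition \eqref{eq:H'} preserves multiplicities. One should also check that $\alpha(G_{\vartriangle})\ge 1$ is not needed, i.e., that the statement is vacuous when there are no such cliques; this is immediate.
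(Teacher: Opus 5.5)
Your proposal is correct and follows the paper's proof: a maximum independent set of $G_{\vartriangle}$ gives the principal submatrix $3I_s$ of $\mathcal{C}\mathcal{C}^\intercal=3I+A(G_{\vartriangle})$, and interlacing yields $\alpha(G_{\vartriangle})$ eigenvalues at least $3$. These pass to $\mathcal{H}(G)$ via the block form \eqref{eq:H'} and Theorem~\ref{4-9-thm1}, and Lemma~\ref{4-9-lem3} finishes; your explicit multiplicity bookkeeping (nonzero spectra of $XX^\intercal$ and $X^\intercal X$, plus the block-diagonal decomposition) spells out a step the paper passes over quickly.
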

\begin{proof}
Assume that $H$ is the subgraph induced by the maximum independent set of $G_{\vartriangle}$. It is clear that $\lambda_1(A(H))=\cdots=\lambda_{|\alpha(G_{\vartriangle})|}=0$. According to Lemma \ref{cau-inter}, we get $\lambda_{\alpha(G_{\vartriangle})}(A(G_{\vartriangle}))\ge 0$. Therefore, $G$ has at least $\alpha(G_{\vartriangle})$ $\mathcal{H}$-eigenvalues not less than $3$.  Hence, the result follows from Lemma \ref{4-9-lem3}.
\end{proof}

In particular, we get the following result.
\begin{cor}\label{4-9-cor1}
	If $\omega$ is the clique number of  a graph $G$, then $G$ has at least $\frac{{\omega\choose 3}}{1+3(\omega-3)}$ $\mathcal{H}$-eigenvalues not less than $3$.
\end{cor}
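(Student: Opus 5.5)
Corollary~\ref{4-9-cor1} should follow from Theorem~\ref{4-9-thm2} applied with a single clique, so I will take $t=1$ and $s_1=\omega$. Since $\omega$ is the clique number, $G$ contains a copy of $K_\omega$, and one clique is trivially a family of edge-disjoint cliques. Theorem~\ref{4-9-thm2} then says that $G$ has at least $\binom{\omega}{3}/(1+3(\omega-3))$ $\mathcal{H}$-eigenvalues not less than $3$, which is exactly the bound claimed.

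That theorem needs $s_1\ge 3$, so the cases $\omega\le 2$ must be handled separately. Here $\binom{\omega}{3}=0$, so the claim is that $G$ has at least $0$ such eigenvalues, which is vacuous; the denominator $1+3(\omega-3)$ equals $-2$ or $-5$ and is nonzero, so the expression is just $0$.

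To be sure the corollary does not inherit a gap, I would also check how the bound passes from $G_{\vartriangle}$ to $\mathcal{H}(G)$. By Lemma~\ref{4-9-lem3}, or directly by Lemma~\ref{4-9-lem2a} with $n=\omega$, the triangular graph $G_{\vartriangle}$ has an independent set $I$ of at least the stated size. The principal submatrix of $A(G_{\vartriangle})$ indexed by $I$ is the zero matrix, so the Interlacing Theorem~\ref{cau-inter} gives $\eta_{|I|}\ge 0$. Hence $3+\eta_j\ge 3$ for $j\le |I|$. All of these values are nonzero, and by Theorem~\ref{4-9-thm1} each one is an $\mathcal{H}$-eigenvalue of $G$.

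The only point that needs care is multiplicity. The values $3+\eta_j$ enter the spectrum of $\mathcal{H}'(G)$ through the block $3I+A(G_{\vartriangle})$ of \eqref{eq:H'}, and they are counted separately from the Laplacian block. Since $\mathcal{H}(G)$ and $\mathcal{H}'(G)$ have the same nonzero eigenvalues with the same multiplicities, each $3+\eta_j$ with $j\le|I|$ contributes one $\mathcal{H}$-eigenvalue, counted with multiplicity. I therefore expect no real obstacle: the corollary is a direct specialization of Theorem~\ref{4-9-thm2}.
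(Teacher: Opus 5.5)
Your proposal is correct and takes the paper's approach: the paper states this corollary as the special case $t=1$, $s_1=\omega$ of Theorem~\ref{4-9-thm2}, and your re-check of the independent-set and interlacing step and of the multiplicity transfer from $\mathcal{H}'(G)$ to $\mathcal{H}(G)$ matches the paper's proof of that theorem. Your separate remark that the bound equals $0$, and so is vacuous, when $\omega\le 2$ is a harmless addition that the paper leaves implicit.
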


\section{Remarks}\label{sec:rem}

In the paper, we prove the $\mathcal{H}$-eigenvalues of graphs to be non-negative, and consider the irreducibility and eigenvalue interlacing. We also emphasize a relation between the $\mathcal{H}$-spectrum of a graph and the spectra of Laplacian matrix of the same graph and  adjacency matrix of the corresponding triangular graph, given in \eqref{eq:H'}. Some research problems faced during the investigation are formulated. Inspired by Exam. \ref{ex-1}, another paper about the graphs with few distinct $\mathcal{H}$-eigenvalues is forthcoming.

Although we have considered only finite, unoriented, simple graphs,  we relate them to graph generalizations including  oriented graphs, weighted graphs and signed graphs.  Similarly, we have considered the spectrum of the Helmholtzian matrix, but we also discover an interplay between it and  the adjacency spectra or the Laplacian spectra of certain related  structures. Accordingly,  the $\mathcal{H}$-spectrum bridges and  balances between several graph types and their structural and spectral relationships.

This paper is a building block, and we expect more come.

\section*{Declaration of Interest Statement}

The authors declare that they have no known competing financial interests or personal relationships that could
have appeared to influence the work reported in this paper.

\section*{Data availability statement}

There is no associated data.

\section*{Acknowledgements}

Jianfeng Wang  would like to thank Professor  L.-H. Lim for his helpful suggestions, and Shu Li as well as Zhen Chen for their discussions.

Lu Lu is supported by the National Natural Science Foundation of China (No.~12371362). Yongtang Shi is supported by the National Natural Science Foundation of China (No.~11922112). Zoran Stani\'c is supported by the Ministry
of Science, Technological Development, and Innovation of the Republic of Serbia (No.~451-03-136/2025-
03/200104). Jianfeng Wang is supported by the National Natural Science Foundation of China (No.~12371353).  Yi Wang is supported by the National Natural Science Foundation of China (No.~12171002 and 12331012).
%
%

{}

\addcontentsline{toc}{section}{References}

\begin{thebibliography}{99}

\bibitem{BSZ}
F. Belardo, Z. Stani\'c, T. Zaslavsky, Total graph of a signed graph, Ars Math. Contemp., 23 (2023), $\#$P1.02.

\bibitem{bro-hae-book}
A.E. Brouwer, W.H. Haemers, Spectra of Graphs,  Springer, Berlin 2012.

\bibitem{Caro}
Y. Caro, New Results on the Independence Number, Technical report, Tel Aviv University, 1979.

\bibitem{chung-book}
F. Chung, L.Y. Lu,  Complex Graphs and Networks, The American Mathematical Society, New York, 2006.

\bibitem{cve-doob-sachs-book}
D.M. Cvetkovi\'{c}, M. Doob, H. Sachs, Spectra of Graphs, third edition, Johann Ambrosius
Barth, Heidelberg-Leipzig, 1995.

\bibitem{god-roy}
C. Godsil, G. Royle,  Algebraic Graph Theory, Springer, Berlin, 2001.

\bibitem{jiang-lim}
X. Jiang, L.-H. Lim, Y. Yao, Y. Ye, Statistical ranking and combinatorial Hodge theory, Math. Program. 127 (2011) 203--244.

\bibitem{le}
V.B. Le, Gallai graphs and anti-Gallai graphs, Discrete Math. 159 (1996) 179--189.

\bibitem{li-lu-wang}
S. Li, L. Lu, J.F. Wang, A graph discretization of vector Laplacian, Discrete Appl. Math.
379 (2026) 446--460.

\bibitem{Hodge-lim}
L.-H. Lim, Hodge Laplacians on graphs, SIAM Rev. 62 (2020) 685--715.

\bibitem{sch-Siam}
M.T. Schaub, A.R. Benson, P. Horn, G. Lippner,  A. Jadbabaie, Random Walks on Simplicial Complexes and the Normalized Hodge 1-Laplacian, SIAM Rev.  62 (2020) 353--391.

\bibitem{shi-chen}
D.H. Shi, G.R. Chen, Simplicial networks: a powerful tool for characterizing higher-order
interactions, Nat. Sci. Rev.  9(5) (2022) nwac038.

\bibitem{SiSt}
S.K. Simi\' c, Z. Stani\' c, Polynomial reconstruction of signed graphs, Linear Algebra Appl. 501 (2016) 390--408.

\bibitem{ifge}
Z. Stani\' c, Inequalities for Graph Eigenvalues, Cambridge University Press, Cambridge, 2015.

\bibitem{wat-str}
D.J. Watts, S.H. Strogatz, Collective dynamics of 'small world' networks, Nature 393 (1998) 440--442.

\bibitem{Wei}
V. Wei, A lower bound on the stability number of a simple graph, Technical report, Bell
Laboratories Technical Memorandum, 1981.

\bibitem{wis-dam}
P. Wissing, E.R. van Dam, Spectral fundamentals and characterizations of signed directed graphs, J. Comb. Theory, Ser. A 187 (2022) 105573

\bibitem{Zas}
T. Zaslavsky, Matrices in the theory of signed simple graphs, in: B.D. Acharya, G.O.H. Katona, J. Ne\v set\v ril (Eds.), Advances in Discrete Mathematics and Applications: Mysore 2008, Ramanujan Math. Soc., Mysore, 2010, pp.~207--229.
\end{thebibliography}
\end{document}